\newcommand{\obj}{\mathrm{obj}}
\newcommand{\morph}{\mathrm{morph}}
\newcommand{\Gra}{\mathbf{Gra}}
\newcommand{\End}{\mathrm{End}}
\newcommand{\Rel}{\mathbf{Rel}}
\renewcommand{\End}{\mathrm{End}}
\newtheorem{theorem}{Theorem}[section]
\newtheorem*{theorem*}{Theorem}
\newtheorem{lemma}[theorem]{Lemma}
\newtheorem*{proposition*}{Proposition}
\newtheorem{corollary}[theorem]{Corollary}
\newtheorem*{corollary*}{Corollary}
\newtheorem*{fact*}{Fact}
\newtheorem{fact}[theorem]{Fact}
\theoremstyle{definition}
\newtheorem*{definition*}{Definition}
\newtheorem{definition}[theorem]{Definition}
\newtheorem*{question*}{Question}
\newtheorem*{notation*}{Notation}
\theoremstyle{remark}
\newcommand{\Aut}{\mathsf{Aut}}
\newcommand{\Ind}[1]
{#1\setbox0=\hbox{$#1x$}\kern\wd0\hbox to 0pt{\hss$#1\mid$\hss} \lower.9\ht0\hbox to 0pt{\hss$#1\smile$\hss}\kern\wd0}
\newcommand{\notind}[1]
{#1\setbox0=\hbox{$#1x$}\kern\wd0
\hbox to 0pt{\mathchardef\nn=12854\hss$#1\nn$\kern1.4\wd0\hss}
\hbox to 0pt{\hss$#1\mid$\hss}\lower.9\ht0 \hbox to 0pt{\hss$#1\smile$\hss}\kern\wd0}
\newcommand{\N}{\mathbb{N}}
\newcommand{\Cfrak}{\ensuremath{\mathfrak{C}}}
\author[I. Eleftheriadis]{Ioannis {Eleftheriadis}\ \orcidlink{0000-0003-4764-8894}}
\address{Department of Computer Science and Technology\\
University of Cambridge\\15 JJ Thomson Ave, CB3 0FD\\Cambridge, UK}
\urladdr{https://www.cst.cam.ac.uk/people/ie257}
\email{ie257@cam.ac.uk}
\thanks{Supported by a George and Marrie Vergottis Scholarship awarded through Cambridge Trust, an Onassis Foundation Scholarship, and a Robert Sansom Studentship.}
\subjclass{18B15, 08C05, 05C62}
\keywords{Graph homomorphisms, category of graphs, algebraically universal categories, slice categories} 
\title{Universal slices of the category of graphs}
\begin{document}
\maketitle

\begin{abstract}
        We characterise the slices of the category of graphs that are algebraically universal in terms of the structure of the slicing graph. In particular, we show that algebraic universality is obtained if, and only if, the slicing graph contains one of four fixed graphs as a subgraph. 
\end{abstract}

\section{Introduction}

The notion of \emph{algebraic universality} was defined by Isbell \cite{isbell} as a generalisation to a series of representation results in different categories of structures \cite{birkhoff},\cite{groot},\cite{sabidussi}. The various degrees of representability in a category are captured by the following definition. 

\begin{definition*}[\cite{setendofunctors}]
    We say that a category $\Cfrak$ is:
    \begin{itemize}
        \item group-universal, if for every group $G$ there is some $C \in \obj(\Cfrak)$ such that $\Aut_\Cfrak(C)$ is isomorphic to $G$;
        \item strongly group-universal, if for every group $G$ there is some $C \in \obj(\Cfrak)$ such that $\End_\Cfrak(C)$ is a group isomorphic to $G$;
        \item monoid-universal, if for every monoid $M$ there is some $C \in \obj(\Cfrak)$ such that $\End_\Cfrak(C)$ is isomorphic to $M$;
        \item alg-universal, if every algebraic category fully embeds into $\Cfrak$;
        \item universal, if every concretisable category fully embeds into $\Cfrak$;
        \item hyper-universal, if every category fully embeds into $\Cfrak$.
    \end{itemize}
\end{definition*}

We briefly discuss the relationship between these notions. By a result of Pultr \cite{pultr1964concerning} it follows that algebraically universal categories fully embed all small categories. Viewing any monoid as a one-object category, this establishes that algebraic universality implies monoid-universality. Moreover, by work of Kučera \cite{kucera}, Hedrlin, and Pultr \cite{hedrlinpultr}, it follows under set-theoretic assumptions that every concretisable category is algebraic, i.e. isomorphic to a full category of universal algebras, and so universality coincides with algebraic universality. For all remaining notions the implications are strict, though the examples witnessing this are not always natural. 

The study of algebraically universal categories was extensively carried out by the ``Prague school'' of category theory in the 1960s (see \cite{pultr} for an overview). In addition to the results above, it was established \cite{hedrlin1966full} that the category of graphs with graph homomorphisms is algebraically universal. At the same time, this category is itself fully embeddable in the category of binary universal algebras; consequently, algebraic universality amounts to the existence of a full embedding from the category of graphs.

This immediately raises interesting combinatorial questions: \emph{which ``nicely-defined'' classes of graphs produce algebraically universal categories?} Several different approaches have been taken to questions of this style \cite{dirgra}, \cite{nesetrilossona}, \cite{algunirel}. Here, we consider one that is more natural to the working category theorist: \emph{which graphs produce algebraically universal categories when slicing over them?} We are able to provide a complete description of when this happens. 

\begin{theorem*}
    The following are equivalent for an undirected graph $G$:
    \begin{enumerate}
        \item\label{1} $G$ is not a subgraph of a disjoint union of paths of length three;
        \item\label{2} $G$ contains one of $C_3, C_4, P_4$, or $Y$ as a subgraph; 
        \item\label{3} The slice category $\Gra/G$ is algebraically universal. 
    \end{enumerate}
    \begin{figure}[h!]
    \begin{tikzpicture}[scale=0.30]

  \node[fill,circle,scale=0.5] (A) at (0, 0) {};
  \node[fill,circle,scale=0.5]  (B) at (6, 0) {};
  \node[fill,circle,scale=0.5]  (C) at (3, 6) {};

  \draw(A) -- (B);
  \draw (B) -- (C);
  \draw (A) -- (C);

\end{tikzpicture}
    \qquad
    \begin{tikzpicture}[scale=0.30]

  \node[fill,circle,scale=0.5] (A) at (0, 0) {};
  \node[fill,circle,scale=0.5]  (B) at (6, 0) {};
  \node[fill,circle,scale=0.5]  (C) at (6, 6) {};
  \node[fill,circle,scale=0.5]  (D) at (0, 6) {};

  \draw(A) -- (B);
  \draw (B) -- (C);
  \draw (C) -- (D);
  \draw (A) -- (D);

\end{tikzpicture}
    \qquad
    \begin{tikzpicture}[scale=0.30]

  \node[fill,circle,scale=0.5] (A) at (0, 0) {};
  \node[fill,circle,scale=0.5]  (B) at (1.5, 1.5) {};
  \node[fill,circle,scale=0.5]  (C) at (3, 3) {};
  \node[fill,circle,scale=0.5]  (D) at (4.5, 4.5) {};
  \node[fill,circle,scale=0.5]  (E) at (6, 6) {};

  \draw(A) -- (B);
  \draw (B) -- (C);
  \draw (C) -- (D);
  \draw (E) -- (D);

\end{tikzpicture}
    \qquad
    \begin{tikzpicture}[scale=0.30]

  \node[fill,circle,scale=0.5] (A) at (0, 0) {};
  \node[fill,circle,scale=0.5]  (B) at (3, 3) {};
  \node[fill,circle,scale=0.5]  (C) at (3, 6) {};
  \node[fill,circle,scale=0.5]  (D) at (6, 3) {};

  \draw(A) -- (B);
  \draw (B) -- (C);
  \draw (B) -- (D);

\end{tikzpicture}
\end{figure}
\end{theorem*}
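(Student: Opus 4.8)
The plan is to prove $(1)\Leftrightarrow(2)$ by a direct forbidden-subgraph analysis and then to link these to $(3)$ by establishing $(2)\Rightarrow(3)$ together with $\lnot(2)\Rightarrow\lnot(3)$; since $\lnot(2)$ coincides with $\lnot(1)$, this closes the cycle. Throughout I read $P_4$ as the five-vertex path, $Y$ as the claw $K_{1,3}$, and ``path of length three'' as the four-vertex path. For $(1)\Leftrightarrow(2)$, observe that being a subgraph of a disjoint union of four-vertex paths is equivalent to being a linear forest whose components have at most four vertices, i.e.\ to simultaneously having maximum degree at most $2$, being acyclic, and containing no five-vertex path. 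I would check that the four listed graphs control exactly these three conditions: a vertex of degree $\ge 3$ produces a copy of $K_{1,3}=Y$; a cycle of length $3$ or $4$ is $C_3$ or $C_4$, while any longer cycle already contains $P_4$; and a path component on five or more vertices contains $P_4$. Conversely each of $C_3,C_4,P_4,Y$ violates one of the conditions, so omitting all four is equivalent to $(1)$. Verifying that these four graphs form an antichain under the subgraph relation keeps the bookkeeping clean.

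For $(2)\Rightarrow(3)$ the first step is a monotonicity lemma: if $\iota\colon F\hookrightarrow G$ is a subgraph inclusion, then post-composition $(H,f)\mapsto(H,\iota\circ f)$ is a full embedding $\Gra/F\to\Gra/G$. Faithfulness is immediate, and fullness holds because $\iota$ is a monomorphism, so a morphism $\psi$ over $G$ satisfying $\iota f'\psi=\iota f$ already satisfies $f'\psi=f$ and is a morphism over $F$. As full embeddings compose and algebraic universality transfers along them, it suffices to prove that $\Gra/F$ is alg-universal for each $F\in\{C_3,C_4,P_4,Y\}$; and since $\Gra$ is alg-universal, I only need a full embedding $\Gra\hookrightarrow\Gra/F$ in each case. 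I would construct these by the standard gadget method: replace each vertex and each edge of an input graph by a fixed finite graph carrying a homomorphism to $F$, wired so that the only color-preserving (i.e.\ over-$F$) homomorphisms between the resulting objects are those induced by genuine graph homomorphisms. For $F=C_3$ this is essentially the classical universality of $3$-colored graphs; the cases $F=C_4$ (two free $2$-colorings over a bipartition), $F=Y$ (a designated ``centre'' side with a $3$-colored ``leaf'' side), and especially $F=P_4$ (a height function) each require a gadget that is rigid relative to $F$.

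For $\lnot(2)\Rightarrow\lnot(3)$, assume $G$ is a linear forest with components of at most four vertices and write $G=\bigsqcup_j G_j$ for its components. A homomorphism into $G$ sends each connected component of its source into a single $G_j$, which yields $\Gra/G\simeq\prod_j\Gra/G_j$ with each $G_j$ one of $K_1,K_2$, the three-vertex path, or the four-vertex path. The base case is $\Gra/K_1\simeq\mathbf{Set}$, which is not even group-universal. I would then isolate a structural property of each small path-slice that obstructs alg-universality and is stable under products: a homomorphism to a path forces the source to be bipartite and layered, which prevents the category from expressing irreflexive adjacency inside a single sort, exactly the feature a full embedding of $\Gra$ needs. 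Because automorphism and endomorphism structure in a product is the product of the factorwise structure, I would formulate the obstruction so that it is manifestly inherited by $\prod_j\Gra/G_j$.

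The main obstacle I anticipate is two-fold and concentrated at the sharp transition between the four-vertex and five-vertex paths. On the universality side, the gadget for $F=P_4$ must realize arbitrary adjacency using only five layers and no cycles, which is the tightest of the four constructions. On the non-universality side, I must pin down a single product-stable invariant that simultaneously rules out alg-universality for all of $\Gra/K_1,\Gra/K_2,\Gra/P_2,\Gra/P_3$ yet is not shared by $\Gra/P_4$. Finding a clean formulation of ``cannot encode irreflexive adjacency'' that both fails above the threshold and is preserved under arbitrary products is, I expect, the crux of the whole argument.
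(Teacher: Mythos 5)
The decisive gap is in the direction $\lnot(2)\Rightarrow\lnot(3)$. Your decomposition $\Gra/G\simeq\prod_j \Gra/G_j$ is correct (components of the source must follow components of the target), and so is the remark that endomorphism monoids in a product are products of the factorwise ones; but the whole direction then rests on an invariant you never formulate, and the one you gesture at cannot work. Any obstruction of the form ``sources are bipartite and layered, hence cannot encode irreflexive adjacency'' is shared verbatim by $\Gra/P_4$ --- objects over the length-four path are exactly as bipartite and layered as objects over $P_3$ --- yet $\Gra/P_4$ \emph{is} alg-universal, so no structural property of the underlying graphs of this kind can separate the two sides of the threshold. What does work, and what the paper uses, is an invariant about endomorphism monoids themselves: in $\Gra/G_j$ for each $G_j\in\{P_0,P_1,P_2,P_3\}$ (single vertex, single edge, paths of length two and three), every object is either rigid or admits an endomorphism that is not an automorphism. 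This makes $\End$ of every object either trivial or not a group, which kills strong group-universality and hence alg-universality, and it is product-stable in precisely your sense. Proving it for $\Gra/P_3$ is the real work: for $H$ connected with $f:H\to P_3$ surjective, one retracts $(H,f)$ onto a shortest path of $H$ joining a preimage of $0$ to a preimage of $3$, via an explicit distance function (\Cref{lem:pathslice}); and for disconnected $H$ --- which your product decomposition does \emph{not} eliminate, since a single factor $\Gra/P_3$ still contains disconnected objects --- one further needs that connected objects with $\subseteq$-comparable images map into one another (\Cref{cor:comparable}, \Cref{lem:rigidslice}). None of this appears in your outline, and you yourself flag it as the unresolved crux.

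On the universality side your architecture matches the paper's: the monotonicity lemma for subgraph inclusions is exactly the paper's reduction to the four base cases, and your ``gadget method'' is the paper's arrow construction for slices (\Cref{thm:arrow}). But the four gadgets and their rigidity verifications are the technical substance of this direction, and you leave them unconstructed. For comparison, the paper needs no vertex gadgets at all, only an edge gadget consisting of a labelled path whose two endpoints are identified by the structure map ($P_3\to C_3$, $P_4\to C_4$, $P_{12}\to P_4$, $P_6\to Y$), and fullness is forced by chasing the vertices of the base whose fibre in the gadget is a single point (e.g.\ the vertex $1$ of $C_3$, the vertex $4$ of $P_4$, the vertices $2$ and $3$ of $Y$). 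So the universality half of your plan is a faithful sketch of the paper's proof with its core computations omitted, whereas the non-universality half is missing the idea on which the paper's argument actually turns.
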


The equivalence of \ref{1} and \ref{2} is trivial. In the case that $G$ contains one of the specified graphs we produce a fully faithful functor $\Gra \to \Gra/G$ witnessing universality\footnote{The embedding is even  \emph{strong} in the sense of \cite{pultr}, I.6.11}. This is established via an extension of the arrow construction of graphs to slice categories. On the other hand, whenever $G$ is a disjoint union of paths of length three we show that $\Gra/G$ is not even strongly group-universal by proving that every object in $\Gra/G$ is either rigid or it has a proper endomorphism, i.e. an endomorphism which is not an automorphism.

\section{The category of graphs}

Unless specified otherwise, graphs always refer to undirected graphs. A graph homomorphism is a map $f: G \to H$ such that 
\[ (u,v) \in E(G) \implies (f(u),f(v)) \in E(H).\]
We write $\Gra$ for the category of graphs with graph homomorphisms. By a digraph we mean a set $D$ with an arbitrary binary relation $E(D)\subseteq D\times D$. Digraph homomorphisms are defined analogously, and we write $\Rel(2)$ for the category of digraphs with homomorphisms. An isolated point of a digraph $D$ is an element $x \in D$ which does not appear in any edge of $D$. We write $\Rel(2)^+$ for the full subcategory of $\Rel(2)$ on the digraphs without isolated points. 

\begin{fact}[\cite{pultr}]
    The categories $\Gra, \Rel(2), \Rel(2)^+$ are algebraic and algebraically universal. 
\end{fact}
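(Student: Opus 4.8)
The plan is to leverage the results already recorded in the introduction, which carry most of the weight. Recall that $\Gra$ is known to be algebraically universal, and that it embeds fully into the category of binary universal algebras; the latter already witnesses that $\Gra$ is algebraic. Moreover, the introduction records the principle that a category $\Cfrak$ is algebraically universal precisely when $\Gra$ admits a full embedding into $\Cfrak$. Thus for each of the three categories it suffices to (i) produce a full embedding from $\Gra$, which yields algebraic universality, and (ii) exhibit a full embedding into some category $\Alg(\Sigma)$ of universal algebras, which yields algebraicity. For $\Gra$ itself both tasks are already discharged (the identity functor, and the stated embedding into binary algebras), so the work concentrates on $\Rel(2)$ and $\Rel(2)^+$.

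First I would treat algebraicity of $\Rel(2)$ through the standard incidence encoding. To a digraph $(X,E)$ I associate the unary algebra on the disjoint union $X \sqcup E$ equipped with two operations $s,t$ that send an edge $(a,b)$ to its endpoints $a,b$ and fix every vertex; a digraph homomorphism $f$ then extends to the algebra homomorphism acting as $f$ on vertices and as $(a,b) \mapsto (f(a),f(b))$ on edges. This is a faithful functor into $\Alg(s,t)$ that is injective on objects, and the point requiring care is fullness: one must verify that an arbitrary algebra homomorphism between two such algebras necessarily preserves the partition into vertices and edges — vertices being recoverable as the fixed points of $s$ — and does not collapse an edge onto a vertex. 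The delicate case is that of loops and of edges whose two endpoints are identified downstream, and the encoding must be rigid enough (if necessary by adjoining an auxiliary operation or marker distinguishing edges) to forbid this. Once $\Rel(2)$ is shown algebraic, both $\Rel(2)^+$ and $\Gra$ inherit algebraicity, since each is (isomorphic to) a full subcategory of $\Rel(2)$ — the latter via the symmetrisation functor below — and algebraicity is plainly preserved under passage to full subcategories.

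For algebraic universality of $\Rel(2)$ I would use the symmetrisation functor sending a graph $G$ to the irreflexive digraph with edge set $\{(u,v),(v,u) : \{u,v\} \in E(G)\}$. This is visibly faithful and injective on objects, and it is full: a digraph homomorphism between two symmetric irreflexive digraphs preserves the symmetric edge relation and, by irreflexivity, cannot send an edge to a loop, hence is exactly a graph homomorphism between the underlying graphs. By the stated equivalence this already shows $\Rel(2)$ is algebraically universal.

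The genuinely delicate construction is the full embedding $\Gra \to \Rel(2)^+$ needed for the remaining case, since symmetrisation sends isolated vertices to isolated points and so fails to land in $\Rel(2)^+$. Here I would invoke the arrow construction $\Arc$: replace each edge of $G$ by a fixed rigid asymmetric gadget (for instance a short directed path with an orientation pattern admitting no nontrivial endomorphism) joining its endpoints, and additionally attach to every vertex a private rigid rooted gadget having no isolated points. The decorations guarantee that no vertex of the resulting digraph is isolated, so the functor lands in $\Rel(2)^+$, while the rigidity and asymmetry of the gadgets are exactly what forces every homomorphism between images to send original vertices to original vertices, to map gadgets to gadgets of the same type, and never to collapse an edge, so that it descends to a graph homomorphism. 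Composing with the inclusion $\Rel(2)^+ \hookrightarrow \Rel(2)$ recovers the embedding of the previous paragraph. I expect the main obstacle throughout to be precisely these fullness verifications: designing gadgets rigid enough that the only homomorphisms between images are the intended ones, and simultaneously free of isolated points, is the crux of both the incidence encoding and the arrow construction.
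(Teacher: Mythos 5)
This Fact is not proved in the paper at all: it is imported wholesale from \cite{pultr}, so there is no internal argument to compare against. Judged as a standalone proof, your attempt contains a genuine gap, and it sits exactly where you yourself locate ``the crux.'' The incidence encoding $D \mapsto (X \sqcup E;\, s,t)$, with $s,t$ fixing vertices and sending an edge to its endpoints, is faithful but provably \emph{not} full, and this is not a corner case that a ``marker'' operation can patch. Since every vertex is a common fixed point of $s$ and $t$, the constant map onto any vertex $v$ of the target algebra commutes with both operations and is therefore an algebra homomorphism; but it is never the image of a digraph homomorphism once $D_1$ has an edge (images of digraph homomorphisms send edge-elements to edge-elements, and a constant digraph map need not even be a homomorphism unless $v$ has a loop). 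The obstruction is structural: homomorphisms of algebras preserve operation values as \emph{equalities}, so any naive operational encoding of a relation either forces strong (non-edge-preserving) maps or admits collapsing/parasitic homomorphisms; killing constant maps requires that no element be a common fixed point, and for an isolated vertex there is nothing functorially available to move it to except auxiliary elements, which reintroduce swap-type parasitic homomorphisms. Arranging that the algebra homomorphisms are \emph{exactly} the digraph homomorphisms is precisely the content of the Hedrl\'in--Pultr full embedding theorem of $\Rel(2)$ into algebras with two unary operations \cite{hedrlinpultr}, a substantial gadget construction, not an adjustment of your functor. Saying the encoding ``must be rigid enough'' concedes the very point to be proven.

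The same criticism applies, less severely, to your full embedding $\Gra \to \Rel(2)^+$: no gadgets are exhibited, and the natural candidates fail. Attaching a pendant edge or short path to every vertex already breaks fullness (a pendant edge in one image can fold onto an edge-gadget or pendant of another, giving homomorphisms that restrict to nothing on the original vertices), so the rigidity you invoke must be constructed and verified, which is again the actual work. What \emph{is} correct and complete in your proposal: the reduction of algebraic universality to the existence of a full embedding from $\Gra$ (legitimate, given the introduction's cited facts that $\Gra$ is algebraic and alg-universal), the symmetrisation functor $\Gra \to \Rel(2)$, whose fullness does follow from irreflexivity of the image digraphs, and the observation that algebraicity passes to the full subcategory $\Rel(2)^+$. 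So, modulo the cited background, you have genuinely established the algebraic universality of $\Rel(2)$, but the algebraicity claim and the $\Rel(2)^+$ case remain open in your write-up; the paper avoids all of this by citing \cite{pultr}, where these constructions are carried out in full.
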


In most examples, algebraic universality is established via a particular type of fully faithful functor from the category of digraphs. This is often referred to as the \emph{arrow construction} \cite{mendelsohn}. Intuitively, the idea is to glue a copy of a fixed ``arrow'' taken from the target category to each edge of a given digraph. Provided that the arrow is chosen appropriately, this assignment gives rise to a full embedding. We describe the construction in detail in the case of graphs; this will then be extended to the context of slice categories in \Cref{sec:arrowslice}. 

\subsection{The arrow construction for graphs}

\begin{definition}
    We call a tuple $(H,a,b)$ where $H$ is an undirected graph and $a,b$ are two vertices of $H$ a \emph{graph gadget with respect to $a$ and $b$}. Whenever it is clear from the context, we simply write $H$ for $(H,a,b)$. Given a graph gadget $(H,a,b)$ and a digraph $D$ we define an undirected graph $D \star H$ on the domain $D \sqcup (E(D)\times H\setminus\{a,b\})$. For every edge $(u,v)$ of $D$ define the maps $\phi_{(u,v)}^{D,H}:H \to D \star H$ as follows
    \begin{equation*}
    \phi_{(u,v)}^{D,H}(x) =
    \begin{cases*}
      u, & if $x=a$; \\
      v, & if $x=b$; \\
      (u,v,x), & otherwise.
    \end{cases*}
  \end{equation*}
  We then define the edge set of $D \star H$ as:
  \[\{(x,y): \exists (u,v) \in E(D), (s,t) \in E(H) \text{ such that } \phi_{(u,v)}^{D,H}(s,t)=(x,y)\}.\]
  For a homomorphism $f:D_1 \to D_2$ we define the map $f\star H: D_1 \star H \to D_2 \star H$ by letting
    \begin{equation*}
    f \star H(x) =
    \begin{cases*}
      f(x), &if $x \in D_1$; \\
      (u,v,f(w)), &if $x=(u,v,w)$ for $(u,v)\in E(D_1)$ and $w \in H\setminus\{a,b\}$.
    \end{cases*}
  \end{equation*}
\end{definition}

\begin{lemma}[\cite{pultr},\cite{algunirel}]
    For every graph gadget $(H,a,b)$ and digraph $D$ the maps $\phi_{(u,v)}^{D,H}$ are injective strong homomorphisms. Moreover, for a digraph homomorphism $f:D_1 \to D_2$ the map $f\star H: D_1 \star H \to D_2 \star H$ is a graph homomorphism. Finally, the assignment $F: \Rel(2)^+ \to \Gra$ sending $D \mapsto D\star H$ and $f \mapsto f\star H$ is a faithful functor.
\end{lemma}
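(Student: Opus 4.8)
The plan is to dispatch the three assertions in order, letting the family of gluing maps $\phi_{(u,v)}^{D,H}$ and one ``naturality'' identity carry the argument. The structural fact I would rely on throughout is that $D \star H$ consists of (a copy of) the vertex set of $D$ together with one \emph{private} copy of $H \setminus \{a,b\}$ attached to each edge of $D$; consequently every edge of $D \star H$ incident to an interior vertex $(u,v,x)$ is witnessed by the \emph{unique} edge $(u,v)$ of $D$ whose copy contains it, and only edges lying entirely among the vertices of $D$ can have an ambiguous witness.

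For the first assertion I would begin with injectivity: the values $u$, $v$, and the triples $(u,v,x)$ are pairwise distinct provided $u \neq v$, so I will assume the edge $(u,v)$ is not a loop, as is standard for the arrow construction (one restricts to loopless digraphs). The preservation half of the strong-homomorphism claim is immediate, since by definition every edge $(s,t)$ of $H$ has $\phi_{(u,v)}^{D,H}(s,t)$ an edge of $D \star H$. The content lies in reflection: if $\phi_{(u,v)}^{D,H}(s,t)$ is an edge of $D \star H$, then $(s,t) \in E(H)$. Unwinding the definition produces an edge $(u',v') \in E(D)$ and an edge $(s',t') \in E(H)$ with $\phi_{(u',v')}^{D,H}(s',t') = \phi_{(u,v)}^{D,H}(s,t)$, and I would split into cases by how many of $s,t$ lie in $\{a,b\}$. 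When at least one image is an interior vertex, privateness forces $(u',v') = (u,v)$ and forces the $H$-coordinates to agree, so $(s,t) = (s',t') \in E(H)$. The delicate case is when both $s,t \in \{a,b\}$, so the edge sits inside the embedded copy of $D$: the witness could be the reversed edge $(v,u)$, but since $H$ is \emph{undirected} the witnessing edge of $H$ may be symmetrised and one still recovers $(s,t) \in E(H)$.

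For the second and third assertions I would first establish, by evaluating on $a$, on $b$, and on an interior vertex, the identity
\[ (f \star H) \circ \phi_{(u,v)}^{D_1,H} \;=\; \phi_{(f(u),f(v))}^{D_2,H} \qquad \text{for every } (u,v) \in E(D_1), \]
which is well posed because $f$ being a homomorphism guarantees $(f(u),f(v)) \in E(D_2)$, so the target gadget copy exists. Given this, the homomorphism property of $f \star H$ is immediate: any edge of $D_1 \star H$ is of the form $\phi_{(u,v)}^{D_1,H}(s,t)$ with $(s,t) \in E(H)$, and the identity sends it to the edge $\phi_{(f(u),f(v))}^{D_2,H}(s,t)$ of $D_2 \star H$. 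The same evaluation scheme yields $\id_D \star H = \id_{D \star H}$ and $(g \circ f) \star H = (g \star H) \circ (f \star H)$, giving functoriality. Faithfulness is then the cheapest step: the vertex set of $D$ embeds into that of $D \star H$ and $f \star H$ restricts to $f$ on it, so $f \star H = g \star H$ forces $f = g$ on all of $\Hom(D_1,D_2)$.

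I expect the reflection direction of the first assertion to be the genuine obstacle, and within it the boundary-only case: this is exactly where undirectedness of $H$ is needed, to absorb the orientation reversal coming from the fact that $D$ is directed while $D \star H$ is not, and where looplessness ($u \neq v$) is needed both for injectivity of $\phi_{(u,v)}^{D,H}$ and to keep the two boundary vertices apart. Everything else reduces to checking the two types of vertices of $D \star H$ separately.
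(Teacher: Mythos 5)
Your proof is correct, but there is nothing in the paper to compare it against: the lemma is imported verbatim from \cite{pultr} and \cite{algunirel} and the paper gives no proof of it. Your argument is the standard one and is complete: the privateness observation (an interior vertex $(u,v,x)$ of $D\star H$ determines the unique witnessing edge $(u,v)$ of $D$, so only edges inside the embedded copy of $D$ have ambiguous witnesses), the symmetrisation of the witnessing $H$-edge in the boundary--boundary case, the naturality identity $(f\star H)\circ\phi^{D_1,H}_{(u,v)}=\phi^{D_2,H}_{(f(u),f(v))}$, and faithfulness via restriction to the embedded vertex set of $D_1$ are exactly the ingredients needed. Two of your side remarks are in fact improvements on the statement as printed. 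First, injectivity of $\phi^{D,H}_{(u,v)}$ genuinely fails on a loop $(u,u)$, since then $a$ and $b$ are both sent to $u$; the paper's digraphs are arbitrary binary relations, so your explicit restriction to loopless $D$ (equivalently, to non-loop edges $(u,v)$) is needed for the lemma to be literally true, and as you note the same hypothesis is what pins down the boundary preimage in the mixed boundary--interior case of edge reflection. Second, your naturality identity silently corrects a typo in the paper's definition of $f\star H$: the displayed formula sends $(u,v,w)$ to $(u,v,f(w))$, which is not even a well-typed element of $D_2\star H$ (and $f(w)$ is undefined for $w\in H$); the intended value, which your identity requires and which the paper itself uses in \Cref{sec:arrowslice}, is $(f(u),f(v),w)$.
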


The following definition and theorem illustrate the effectiveness of the arrow construction in the context of undirected graphs. 

\begin{definition}
    We say that an undirected graph $H$ is a \emph{strong replacement graph} with respect to two points $a,b$ if for all irreflexive digraphs $D$ and all homomorphisms $f:H \to D \star H$ the homomorphic image $f[H]$ is contained in some copy of $H$ in $D \star H$, i.e. some $\phi_{(u,v)}^{D,H}[H]$ for $(u,v) \in E(D)$. 
\end{definition}

\begin{theorem}[\cite{neshell}]\label{thm:strongreplace}
    Fix a full subcategory $\Cfrak$ of $\Gra$, and let $H$ be a rigid undirected graph which is a strong replacement graph with respect to $a,b \in H$. Suppose that for every digraph $D$ without isolated points $D \star (H,a,b)$ is in $\Cfrak$. Then $\Cfrak$ is algebraically universal. 
\end{theorem}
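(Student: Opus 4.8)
The plan is to exhibit $\Cfrak$ as the codomain of a full embedding from a category that is already algebraically universal, and then appeal to the fact that full embeddings compose. Because the maps $\phi_{(u,v)}^{D,H}$ are only injective when the source digraph is irreflexive, I would not work with all of $\Rel(2)^+$ but with its full subcategory $\mathcal{R}$ of \emph{irreflexive} digraphs without isolated points; this subcategory remains algebraically universal by a routine adaptation of the standard arguments (cf. \cite{pultr}), for instance because the symmetric relations in $\mathcal{R}$ form a full subcategory isomorphic to the category of graphs without isolated points. By the hypothesis that $D \star H \in \Cfrak$ for every isolated-point-free $D$, the functor $F$ of the preceding Lemma restricts to $F \colon \mathcal{R} \to \Cfrak$, and since $\Cfrak$ is a full subcategory of $\Gra$ this restriction has the same hom-sets. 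The Lemma already supplies faithfulness, and injectivity on objects is routine because $D$ is recoverable from $D \star H$; so the whole theorem reduces to showing that $F$ is \emph{full}.

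For fullness I would take a graph homomorphism $g \colon D_1 \star H \to D_2 \star H$ with $D_1, D_2 \in \mathcal{R}$ and reconstruct it one copy of $H$ at a time. Fix an edge $(u,v) \in E(D_1)$. Since $D_1$ is irreflexive, $\phi_{(u,v)}^{D_1,H}$ is an isomorphism of $H$ onto its image, so $g \circ \phi_{(u,v)}^{D_1,H}$ is a homomorphism $H \to D_2 \star H$; as $D_2$ is irreflexive, the strong replacement property forces its image into a single copy $\phi_{(u',v')}^{D_2,H}[H]$ with $(u',v') \in E(D_2)$. Postcomposing with the inverse isomorphism $(\phi_{(u',v')}^{D_2,H})^{-1}$ yields an endomorphism of $H$, which by rigidity is $\id_H$. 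This is the crux: it shows that on the $(u,v)$-copy the map $g$ sends the $a$-endpoint to $u'$, the $b$-endpoint to $v'$, and each interior vertex $(u,v,w)$ to $(u',v',w)$. In particular $g(u)=u'$ and $g(v)=v'$ lie in $D_2$ with $(u',v') \in E(D_2)$, and since $\mathcal{R}$ has no isolated points, every vertex of $D_1$ is an endpoint of some edge, so $g$ carries all of $D_1$ into $D_2$. I would then set $f \colon D_1 \to D_2$ to be the restriction $g\restr_{D_1}$. It is well defined, is a digraph homomorphism by the edge computation just made, and unwinding both the original- and interior-vertex cases gives $f \star H = g$, so $F$ is full. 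Composing a full embedding of an arbitrary algebraic category into $\mathcal{R}$ with $F$ then witnesses the algebraic universality of $\Cfrak$.

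I expect the main obstacle to lie entirely in the fullness step, and in two delicate points there. The first is ensuring that the three-fold composite is genuinely an endomorphism of $H$: this requires each $\phi_{(u,v)}^{D_i,H}[H]$ to be a faithful isomorphic copy of $H$, which is exactly why both $D_1$ and $D_2$ must be irreflexive and why I restrict to $\mathcal{R}$ rather than arguing over all of $\Rel(2)^+$. The second is the coherence of $f$ across different copies sharing a vertex; here the essential observation, again flowing from the rigidity-forced identity, is that edge-endpoints are always sent to original vertices and never to interior vertices, so $f = g\restr_{D_1}$ is consistent and lands in $D_2$. Once these are handled, the remaining verifications---injectivity on objects, the homomorphism property of $f$, and the identity $f \star H = g$---are routine.
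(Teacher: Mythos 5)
The paper gives no proof of this statement at all: it is imported from \cite{neshell}, so the only in-paper point of comparison is the proof of its slice-category analogue, \Cref{thm:arrow}. Your argument is correct, and it is essentially that proof: a faithful arrow-construction functor $D \mapsto D \star H$ into $\Cfrak$, with fullness obtained by noting that any $g\colon D_1 \star H \to D_2 \star H$, composed with $\phi_{(u,v)}^{D_1,H}$, has image inside a single copy $\phi_{(u',v')}^{D_2,H}[H]$, so that rigidity forces $g \circ \phi_{(u,v)}^{D_1,H} = \phi_{(u',v')}^{D_2,H}$, whence $g = (g_{\restriction D_1}) \star H$. The two points you supply beyond \Cref{thm:arrow} are exactly the ones this version needs and are handled correctly: deriving ``the only homomorphisms land as copies'' from strong replacement plus rigidity, and restricting to the category $\mathcal{R}$ of \emph{irreflexive} digraphs without isolated points, which is genuinely necessary here because the strong replacement property is only quantified over irreflexive $D$ (and because $\phi_{(u,u)}^{D,H}$ fails to be injective). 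The one ingredient you take on faith, the algebraic universality of $\mathcal{R}$, is indeed standard and citable from \cite{pultr}; I would, however, rely on that citation rather than on your ``for instance'' justification via graphs without isolated points, since the algebraic universality of that category is itself obtained in the literature by an arrow construction of precisely this kind, so invoking it here has at least the appearance of circularity, whereas the direct citation does not.
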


As an application of the above, we show that the full subcategory $\Gra \to G$ of $\Gra$ consisting of those graphs that homomorphically map to $G$ is universal if, and only if, $G$ is non-bipartite. This result is folklore; however, to the author's best understanding it has not been explicitly written down. We provide a short proof as it is of similar flavour to the main theorem of this paper. One direction is trivial: if $G$ is bipartite then $\Gra \to G$ consists of bipartite graphs, and since every bipartite graph is either rigid or it admits a proper endomorphism, it is not even strongly group-universal. For the converse, we use an appropriate graph gadget.

\begin{definition}
    The undirected graph $G_k$ is defined as in the figure below. Dotted lines denote paths of length $k$. The labels denote the image of the respective points under a homomorphism to the cycle $C_{2k-1}$, where $a$ and $b$ are mapped to $k$.
    \begin{figure}[h!]
    \begin{tikzpicture}[scale=0.32]

  \node[fill,circle,scale=0.5,label={0}] (A) at (0, 0) {};
  \node[fill,circle,scale=0.5,label={k}]  (B) at (6, 0) {};
  \node[fill,circle,scale=0.5,label={1}]  (C) at (12, 0) {};
  \node[fill,circle,scale=0.5,label=below:1]  (D) at (6, 6) {};
  \node[fill,circle,scale=0.5,label=below:{1}]  (E) at (2, 2) {};
  \node[fill,circle,scale=0.5, label=below:{0}]  (F) at (4,4) {};
  \node[fill,circle,scale=0.5, label=below:{0}]  (H) at (9,3) {};
  \node[fill,circle,scale=0.5,label={1}]  (I) at (2,4) {};
  \node[fill,circle,scale=0.5,label={a}]  (K) at (4,8) {};
  \node[fill,circle,scale=0.5,label={0}]  (L) at (6,12) {};
  \node[fill,circle,scale=0.5,label={b}]  (M) at (7.5,9) {};
  \node[fill,circle,scale=0.5, label={1}]  (N) at (9,6) {};
  \node[fill,circle,scale=0.5,label={0}]  (P) at (10.5,3) {};

  \draw[dotted] (A) -- (B);
  \draw[dotted] (B) -- (C);
  \draw (A) -- (E);
  \draw (E) -- (F);
  \draw (F) -- (D);
  \draw (H) -- (D);
  \draw (C) -- (H);
  \draw (A) -- (I);
  \draw[dotted] (I) -- (K);
  \draw[dotted] (K) -- (L);
  \draw (L) -- (D);
  \draw[dotted] (L) -- (M);
  \draw[dotted] (M) -- (N);
  \draw (N) -- (P);
  \draw (P) -- (C);

\end{tikzpicture}
    \label{fig:Gk}
\end{figure}
\end{definition}

\begin{fact}\cite{neshell}\label{factrigid}
    The graph $G_k$ is rigid, and a strong replacement graph with respect to the points $a$ and $b$. 
\end{fact}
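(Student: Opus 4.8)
The plan is to exploit the homomorphism $h\colon G_k \to C_{2k-1}$ recorded by the labelling, whose mere existence forces the odd girth of $G_k$ to be at least $2k-1$; a direct inspection then shows it equals $2k+5$. The structural skeleton I would establish first is the following. Write $\Omega$ for the cycle of length $6k+3$ obtained by deleting the central vertex $D$ (the vertex of degree three incident to the three chords towards $a$, towards $b$, and towards the short pendant paths). Since $\Omega$ is the unique cycle of $G_k$ avoiding $D$, every odd cycle of length less than $6k+3$ passes through $D$; a short case analysis on the three edges at $D$ then shows there are exactly three shortest odd cycles $\gamma_1,\gamma_2,\gamma_3$, each of length $2k+5$. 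They all pass through $D$, their pairwise intersections are paths through $D$ of three \emph{distinct} lengths, their triple intersection is $\{D\}$, and their union is all of $G_k$. This rigid configuration of minimal odd cycles is the engine behind both assertions.

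For rigidity, let $f \in \End(G_k)$. As a homomorphism sends an odd cycle to a closed odd walk of the same length and the odd girth is $2k+5$, each $f[\gamma_i]$ is an honest $(2k+5)$-cycle, hence one of $\gamma_1,\gamma_2,\gamma_3$, and $f$ is injective on each $\gamma_i$. I would first rule out collapse: if $f[\gamma_i]=f[\gamma_j]$, matching the two cycles along their shared path forces $f$ to identify a pair of vertices lying together on the third cycle $\gamma_\ell$, contradicting injectivity of $f$ on $\gamma_\ell$. Thus $f$ permutes $\{\gamma_1,\gamma_2,\gamma_3\}$, and since the three pairwise-intersection lengths are distinct this permutation is the identity, so each $\gamma_i$ is fixed setwise. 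Then $f(D)=D$ as $D$ is the unique common point; each pairwise intersection is a path fixed pointwise (an automorphism of a path fixing the endpoint $D$ is trivial), which fixes at least three vertices of each $\gamma_i$; an automorphism of a cycle fixing three points is the identity, so $f$ is the identity on $G_k=\gamma_1\cup\gamma_2\cup\gamma_3$.

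For the strong replacement property, fix an irreflexive digraph $D'$ (I reserve $D$ for the vertex above) and a homomorphism $f\colon G_k \to D' \star G_k$. The geometric input is that the copies $\phi_{(u,v)}^{D',G_k}[G_k]$ are induced subgraphs meeting only in the vertices of $D'$, so any cycle using the interior of two distinct copies must traverse an $a$–$b$ path inside each and hence has length at least $2\,\mathrm{dist}_{G_k}(a,b)=4k$. As $4k>2k+5$ for $k\ge 3$, every odd cycle of length $2k+5$ in $D'\star G_k$ lies inside a single copy, and in particular the odd girth is again $2k+5$. Consequently each $f[\gamma_i]$ is a genuine $(2k+5)$-cycle confined to one copy $\kappa_i=\phi_{(u_i,v_i)}^{D',G_k}[G_k]$. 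Finally these copies coincide: two distinct copies share at most their common $D'$-vertices, i.e. at most two vertices, whereas $f$ is injective on the shared paths $\gamma_1\cap\gamma_2$ (four vertices) and $\gamma_1\cap\gamma_3$ (three vertices), forcing $\kappa_1=\kappa_2=\kappa_3=:\kappa$. Since $\gamma_1\cup\gamma_2\cup\gamma_3=G_k$, the whole image $f[G_k]$ lies in the single copy $\kappa$, which is exactly the required condition.

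The main obstacle I anticipate is the book-keeping in the first paragraph: computing the odd girth and pinning down the minimal odd cycles together with their intersection pattern and covering property. Once that skeleton is in place, both statements run on the same two principles — minimal odd cycles are preserved by homomorphisms, and they are too short to leave a single copy — and the remaining steps are short. The one genuinely delicate point is the non-collapsing argument for rigidity, which is where the presence of the third cycle $\gamma_\ell$ is essential.
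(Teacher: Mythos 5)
A preliminary but important point: the paper never proves this statement at all --- it is imported as a black-box Fact with a citation to \cite{neshell} --- so there is no internal argument to compare yours against; what follows assesses your proof directly against the figure defining $G_k$. Your structural skeleton is accurate. The cycle space of $G_k$ has dimension three, and all seven of its nonzero elements are single cycles: three odd cycles of length $2k+5$ (which is the odd girth), three even cycles of lengths $4k+4$, $4k+6$, $4k+8$, and the odd cycle of length $6k+3$ avoiding the central degree-three vertex. The pairwise intersections of the three short cycles are paths through that vertex on $4$, $3$, $2$ vertices respectively, the triple intersection is that single vertex, and their union is all of $G_k$. Granting this, your rigidity argument is sound, including the step you rightly call delicate: in each of the three possible collapses $f[\gamma_i]=f[\gamma_j]$, the forced matching of the two complementary arcs (both run from the same start-vertex to the same end-vertex, so there is no orientation ambiguity) identifies a pair of distinct vertices lying together on the third short cycle --- I checked all three cases and such a pair always exists --- contradicting injectivity of $f$ on that cycle. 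This half of the proof is correct for every $k\geq 2$.

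The genuine gap is in the strong-replacement half, exactly where you flagged it: the inequality $4k>2k+5$ holds only for $k\geq 3$, but the paper needs the Fact for $k=2$ as well, since $k=2$ is the case $C_{2k-1}=C_3$, i.e.\ precisely the case invoked when the non-bipartite graph $G$ has a triangle as its shortest odd cycle. For $k=2$ your bound only gives crossing cycles of length $\geq 8$, which does not exclude a $9$-cycle ($=2k+5$) meeting the interiors of two copies, and then the confinement of each $f[\gamma_i]$ to a single copy no longer follows. The repair is a parity refinement of your own argument. Every maximal segment of a cycle not contained in a single copy is the image of an $a$--$b$ path of $G_k$ (here irreflexivity of the digraph is used, so that each copy meets the base vertices in exactly two points), and direct inspection of $G_k$ shows that every $a$--$b$ path has either even length at least $2k$ or odd length at least $2k+5$. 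Hence an odd crossing cycle with two segments has length at least $2k+(2k+5)=4k+5$, and one with three or more segments has length at least $6k$; both exceed $2k+5$ whenever $k\geq 2$. Substituting this bound for your crude $4k$ estimate makes the argument, and with it the whole proposal, complete over the full range of $k$ that the paper uses.
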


\begin{theorem}
    If $G$ is non-bipartite, then $\Gra \to G$ is algebraically universal. 
\end{theorem}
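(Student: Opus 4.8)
The plan is to deduce the result directly from \Cref{thm:strongreplace}, taking $\Cfrak$ to be the full subcategory $\Gra \to G$ and using the gadget $G_k$ supplied by \Cref{factrigid}. Since that fact already tells us that $G_k$ is rigid and a strong replacement graph with respect to $a$ and $b$, the only thing left to verify is the hypothesis that $D \star (G_k,a,b)$ lies in $\Gra \to G$ for every digraph $D$ without isolated points; once this is done, \Cref{thm:strongreplace} yields algebraic universality immediately.

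First I would fix the value of $k$. As $G$ is non-bipartite it contains an odd cycle, and choosing a shortest one, of length $2k-1$ (so that $k\geq 2$), exhibits $C_{2k-1}$ as a subgraph of $G$; the inclusion is then a graph homomorphism $\iota\colon C_{2k-1}\to G$. This is precisely the cycle that the construction of $G_k$ is tailored to: by definition $G_k$ admits a homomorphism $h\colon G_k \to C_{2k-1}$ under which both distinguished vertices $a$ and $b$ are sent to the vertex labelled $k$.

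The heart of the argument is to promote $h$ to a homomorphism on the whole of $D\star G_k$. I would define $g\colon D \star G_k \to C_{2k-1}$ by sending every vertex of $D$ to the vertex $k$ and every internal vertex $(u,v,x)$ to $h(x)$. The point is that this prescription is consistent with the gluing precisely because $h(a)=h(b)=k$: a vertex of $D$ may play the role of the $a$-endpoint of one copy of $G_k$ and of the $b$-endpoint (in either orientation) of another, but all of these copies agree in sending it to $k$. Concretely, $g \circ \phi_{(u,v)}^{D,G_k} = h$ for every edge $(u,v)$ of $D$, so each edge of $D\star G_k$, being the image under some $\phi_{(u,v)}^{D,G_k}$ of an edge of $G_k$, is carried by $g$ to an edge of $C_{2k-1}$. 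Thus $g$ is a homomorphism, and composing with the inclusion gives $\iota \circ g \colon D \star G_k \to G$, witnessing $D \star G_k \in \Gra \to G$.

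With the membership hypothesis established, \Cref{thm:strongreplace} applies verbatim and shows that $\Gra \to G$ is algebraically universal. I expect the only genuinely delicate point to be the consistency check in the previous paragraph — that the local homomorphisms $h$ on the individual copies of $G_k$ glue to a single global homomorphism — which is exactly what the coincidence $h(a)=h(b)=k$ guarantees; everything else is a matter of unwinding the definitions and invoking the cited results.
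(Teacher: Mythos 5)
Your proposal is correct and follows essentially the same route as the paper: it invokes \Cref{factrigid} for rigidity and the strong replacement property of $G_k$, exhibits the homomorphism $D \star G_k \to C_{2k-1}$ via the labelling with $h(a)=h(b)=k$, and concludes by \Cref{thm:strongreplace}. The only cosmetic difference is that you apply \Cref{thm:strongreplace} directly to $\Cfrak = \Gra \to G$ (composing with the inclusion $C_{2k-1} \hookrightarrow G$), whereas the paper first reduces to the full subcategory $\Gra \to C_{2k-1}$; your explicit gluing check for the homomorphism $D\star G_k \to C_{2k-1}$ is exactly the step the paper leaves implicit.
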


\begin{proof}
    Clearly, if $G$ is non-bipartite then it contains a cycle of length $2k-1$ for some $k \in \N$. Hence, it suffices to show that $\Gra \to C_{2k-1}$ is algebraically universal. Consider the graph $G_k$ from \Cref{fig:Gk}. By \Cref{factrigid}, $G_k$ is rigid and a strong replacement graph with respect to the points $a$ and $b$. Moreover, \Cref{fig:Gk} exhibits a homomorphism from $G_k$ to $C_{2k-1}$ that identifies $a$ and $b$. Consequently, for every irreflexive graph $D$ without isolated points there is a homomorphism $D \star G_k \to C_{2k-1}$. \Cref{thm:strongreplace} therefore implies that $\Gra \to C_{2k-1}$ is algebraically universal. 
\end{proof}

\begin{corollary}
    The following are equivalent for an undirected graph $G$:
    \begin{enumerate}
        \item $G$ is not bipartite;
        \item $G$ contains a cycle of odd length;
        \item $\Gra \to G$ is algebraically universal.
    \end{enumerate}
\end{corollary}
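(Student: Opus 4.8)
The plan is to read the corollary off the preceding theorem, treating the three implications separately. The equivalence of \textit{(1)} and \textit{(2)} is the classical characterisation of bipartiteness: a graph is $2$-colourable if and only if it contains no odd cycle, proved by the parity-of-distance colouring applied on each connected component, and I would simply invoke this. The implication \textit{(1)}$\,\Rightarrow\,$\textit{(3)} is precisely the content of the theorem just proved, so there is nothing new to do there. Hence the only substantive direction is \textit{(3)}$\,\Rightarrow\,$\textit{(1)}, which I would prove by contraposition.

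Suppose then that $G$ is bipartite; the goal is to show that $\Gra \to G$ is not algebraically universal. As recalled in the introduction, algebraic universality implies monoid-universality, and \emph{a fortiori} strong group-universality: every group, regarded as a one-object category, would have to be realised as a group-valued endomorphism monoid $\End_{\Gra \to G}(H)$. It therefore suffices to exhibit a single group that is \emph{not} so realised. First I would observe that every object of $\Gra \to G$ is bipartite, since any homomorphism $H \to G$ composes with the $2$-colouring homomorphism $G \to K_2$ to witness $H \to K_2$. Consequently the problem reduces to a statement purely about bipartite graphs: which groups arise as $\End(H)$ for $H$ bipartite with $\End(H)$ a group?

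The key lemma I would prove is that if $H$ is a bipartite graph whose endomorphism monoid is a group, then $H$ has at most two vertices, so that $\End(H)$ is either trivial or isomorphic to $\Z/2\Z$. Indeed, if $\End(H)$ is a group then every endomorphism is invertible, i.e. $H$ is a core, and I would show that a bipartite core has at most two vertices by a folding argument: any connected bipartite graph with an edge admits a homomorphism onto a single edge collapsing each side of its (unique) bipartition onto one endpoint, which is a proper endomorphism as soon as the graph has more than two vertices, while any bipartite graph with more than one connected component admits a proper endomorphism by mapping one component into another. Granting the lemma, no group of order at least three, for instance $\Z/3\Z$, can be realised as a group-valued endomorphism monoid in $\Gra \to G$, so this category is not strongly group-universal and hence not algebraically universal.

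I expect the folding argument to be the only place requiring care, and the subtle point is that the naive dichotomy ``every bipartite graph is either rigid or has a proper endomorphism'' is not literally correct: the single edge $K_2$ is a non-rigid core with $\End(K_2) \cong \Z/2\Z$ and no proper endomorphism. This exception is harmless for the argument, since it only realises $\Z/2\Z$, which is exactly why I would phrase the obstruction as the non-realisability of some group of order at least three rather than as an outright failure of rigidity.
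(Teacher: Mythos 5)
Your proposal is correct and follows essentially the same route as the paper: the equivalence of \textit{(1)} and \textit{(2)} is classical, \textit{(1)}$\Rightarrow$\textit{(3)} is the preceding theorem, and \textit{(3)}$\Rightarrow$\textit{(1)} is handled by contraposition, observing that every object of $\Gra \to G$ is bipartite and then arguing about endomorphism monoids of bipartite graphs to refute strong group-universality. The one place you diverge is actually a point in your favour: the paper literally asserts that ``every bipartite graph is either rigid or it admits a proper endomorphism,'' which, as you note, fails for $K_2$ (a non-rigid core with $\End(K_2)\cong \Z/2\Z$ and no proper endomorphism). Your refinement --- a bipartite graph whose endomorphism monoid is a group is a core, a bipartite core has at most two vertices by the folding/collapsing argument, hence only the trivial group and $\Z/2\Z$ arise, so $\Z/3\Z$ is not realised --- repairs this overstatement while reaching the same conclusion that the category is not strongly group-universal and therefore not algebraically universal. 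So your write-up is, if anything, slightly more careful than the paper's own wording of this step; the overall structure of the argument is identical.
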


Note that despite the close relationship of this result to our \Cref{thm:main}, the characterisation in our context is very different: indeed slices over bipartite graphs can still be alg-universal. 

\section{Slice categories of graphs}

Recall the definition of the slice category over an object. 

\begin{definition}
    Let $\Cfrak$ be a category, and $C \in \obj(\Cfrak)$. The \emph{slice category} or \emph{over category} $\Cfrak/C$ of $\Cfrak$ over $C$ consists of
    \begin{itemize}
        \item pairs $(A,f)$ where $A \in \obj(\Cfrak)$ and $f:A \to C \in \morph(\Cfrak)$ as objects;
        \item morphisms $\psi:A \to B \in \morph(\Cfrak)$ such that $f = g \circ \psi$ as morphisms $(A,f)\to(B,g)$.
    \end{itemize}
    \begin{figure}[h!]
    \begin{tikzcd}[scale=0.4]
A \arrow[dd, "\psi"'] \arrow[rr, "f"] &  & C \\
                                        &  &   \\
B \arrow[rruu, "g"']                    &  &  
\end{tikzcd}
    \end{figure}
\end{definition}

Note that is is not immediately obvious that $\Cfrak/C$ is algebraic assuming that $\Cfrak$ is. We provide the argument for graphs, which can be easily adapted to the general case.

\begin{lemma}
    For all graphs $G$ the category $\Gra/G$ is algebraic. 
\end{lemma}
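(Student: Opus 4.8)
The plan is to present $\Gra/G$ as a category of relational structures and then invoke the standard fact that such categories are algebraic. First I would reformulate the objects of $\Gra/G$ concretely: an object $(A,f)$ is nothing but the graph $A$ together with a vertex-colouring $f\colon V(A)\to V(G)$ that sends adjacent vertices to adjacent (in particular distinct) colours, and a morphism $(A,f)\to(B,g)$ is a graph homomorphism $\psi\colon A\to B$ that preserves this colouring, i.e. $g\circ\psi=f$. This data is exactly a relational structure in the language $\L_G$ having one binary relation symbol $E$ together with a unary predicate $P_v$ for each $v\in V(G)$.

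Concretely, let $\Kcal$ be the full subcategory of $\Rel(\L_G)$ on those structures $M$ for which $E^M$ is symmetric and irreflexive, the predicates $\{P_v^M : v\in V(G)\}$ partition the domain, and $(x,y)\in E^M$ with $x\in P_u^M$, $y\in P_v^M$ forces $u,v$ to be adjacent in $G$. I would define $\Gra/G\to\Kcal$ by $(A,f)\mapsto M$, where $M$ has domain $V(A)$, $E^M=E(A)$ and $P_v^M=f^{-1}(v)$, acting as the identity on underlying maps. The three conditions on $M$ translate precisely into ``$A$ is a graph and $f$ is a homomorphism into $G$'', so this lands in $\Kcal$; conversely $M\mapsto(\,(M,E^M),\,x\mapsto\text{the unique }v\text{ with }x\in P_v^M\,)$ is an inverse on objects. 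The functor is full and faithful essentially by definition: a map $\psi$ is an $\L_G$-homomorphism $M_{(A,f)}\to M_{(B,g)}$ iff it preserves $E$ (a graph homomorphism $A\to B$) and preserves every $P_v$ (which says $g(\psi(x))=f(x)$ for all $x$, i.e. $g\circ\psi=f$), and these two conditions together are exactly the definition of a morphism of $\Gra/G$. Hence $\Gra/G\cong\Kcal$ as categories.

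It remains to note two things. A full subcategory of an algebraic category is algebraic, since being isomorphic to a full subcategory of $\Alg(\Delta)$ is inherited downward. And the category $\Rel(\L_G)$ of relational structures is itself algebraic---this is the general version of the fact already quoted for $\Gra$ and $\Rel(2)$ (for infinite $G$ the type $\L_G$ simply carries infinitely many symbols, which is permitted). Combining these, $\Kcal$ and hence $\Gra/G$ is algebraic.

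The genuine content, and the step I expect to be the main obstacle, is the algebraicity of $\Rel(\L_G)$: one must turn the relation $E$ and the predicates $P_v$ into honest total operations in such a way that algebra homomorphisms correspond exactly---in both directions---to $\L_G$-homomorphisms. The binary relation is handled by the same device that shows $\Gra$ embeds into binary algebras; the new ingredient is encoding the unary colour predicates as operations without creating or destroying morphisms (for instance by adjoining the palette $V(G)$ as a rigid block of constants together with a retraction operation sending each vertex to its colour, taking care that no vertex can be mapped into the palette). If one prefers to avoid this bookkeeping, it can be black-boxed by citing that every category of relational structures is algebraic, which is precisely how the analogous Fact for $\Gra$ and $\Rel(2)$ is obtained.
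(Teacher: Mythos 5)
Your proof is correct and takes essentially the same route as the paper: both encode the structure map $f\colon H\to G$ as unary predicates $P_v^M=f^{-1}(v)$ and thereby fully embed $\Gra/G$ into the category of digraphs with $|V(G)|$-many unary predicates, whose algebraicity is black-boxed. Your only additions---cutting out the exact image subcategory $\Kcal$ and noting that algebraicity passes to full subcategories---are finer bookkeeping of the same argument, not a different idea.
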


\begin{proof}
    Let $G=\{v_i : i < \kappa\}$ be an enumeration of the vertices of $G$. For each $i <\kappa$ let $P_i$ be a unary relation symbol, and consider the category $\Rel(2,(P_i)_{i<\kappa})$ of digraphs expanded with $\kappa$ unary predicates; this is algebraic. Define a functor $F:\Gra/G \to \Rel(2,(P_i)_{i<\kappa})$ by mapping $(H,f)$ to the structure $H_f$ whose binary relation is interpreted in the same way as the edge relation of $H$, while for each $i<\kappa$ the unary predicate $P_i$ is interpreted as $f^{-1}(v_i)\subseteq H$. It is then clear that the homomorphisms $(H,f)\to(D,g) \in \morph(\Gra/G)$ are in one-to-one correspondence with the homomorphisms $H_f \to D_g \in \morph(\Rel(2,(P_i)_{i<\kappa}))$. It follows that $F$ is fully faithful and hence $\Gra/G$ is algebraic. 
\end{proof}

We proceed to describe the arrow construction in the context of slice categories. The idea is very much similar to the graph case, only here the arrow is an object $(H,f)$ from $\Gra/G$. We fix two points $a,b \in H$ and glue $H$ to each edge $(u,v)$ of a given digraph $D$ by identifying $u$ with $a$ and $v$ with $b$. To ensure that the resulting object does in fact map to $G$ for every choice of $D$, we require that $a$ and $b$ are identified under $f$. 

\subsection{The arrow construction for slice categories}\label{sec:arrowslice}

    Let $H,G$ be undirected graphs, $f:H \to G$ a homomorphism, and $a,b$ two vertices of $H$ such that $f(a)=f(b)$. For every digraph $D$ define the map $f_D: D\star (H,a,b) \to G$ by letting 
    
    \begin{equation*}
    f_D(x) =
    \begin{cases*}
      f(a), & if $x \in D$; \\
      f(y), & if $x = (u,v,y)$ for some $(u,v)\in E(D)$ and $y \in H$.
    \end{cases*}
  \end{equation*}

  We claim that this is a homomorphism to $G$. Indeed, if $(s,t) \in E(D \star H)$ then there is an edge $(u,v) \in E(D)$ and an edge $(x,y) \in H$ such that $\phi_{(u,v)}(x)=s$ and $\phi_{(u,v)}(y)=t$. We distinguish cases. If $s,t \in D$ then there is an edge $(a,b)$ in $H$ implying that there is an edge $f((a),f(b))$ in $G$, contradicting that $f(a)=f(b)$. If $s \in D$ and $t = (u,v,w)$ then either there is an edge $(a,w)$ or an edge $(b,w)$ in $H$; in either case, since $f(a)=f(b)=f_D(s)$ and $f_D(t)=f(w)$ there is an edge $(f_D(s),f_D(t))$ in $G$. The case where $s = (u,v,w)$ and $t \in D$ is handled symmetrically. Finally, if $s=(u,v,w)$ and $t=(u,v,z)$ then $(w,z)$ is an edge in $H$; it follows that $(f(w),f(z))=(f_D(s),f_D(t))$ is an edge in $G$. Consequently, $f_D$ is a valid homomorphism $D \star H \to G$, and therefore $(D \star H,f_D)$ is an object of $\Gra/G$.

  We also show how to extend the arrow construction to morphisms in the slice category. Let $h: D_1 \to D_2$ be a homomorphism of directed graphs, and $H,f, a, b$ as above. Consider the morphism $h \star H: D_1 \star H \to D_2 \star H$. We argue that this extends to a map $h \star H: (D_1 \star H,f_{D_1}) \to (D_2 \star H,f_{D_2})$. Indeed, if $x \in D_1$ then $f_{D_2}\circ (h \star H)(x)=f_{D_2}(y)$ where $y \in D_2$ and therefore this is equal to $f(a)$, which in turn is equal to $f_{D_1}(x)$. Moreover, if $x = (u,v,w)$ for $(u,v)\in E(D_1)$ and $w \in H$ then $f_{D_2}\circ (h \star H)(x)=f_{D_2}((h(u),h(v)),w)=f(w)=f_{D_1}(x)$. It follows that $f_{D_1}=f_{D_2}\circ (h \star H)$ as required. 

  Finally, observe that the maps $\phi_{(u,v)}^{D,H}:H \to D \star H$ naturally extend to maps $\phi_{(u,v)}^{D,H}: (H,f) \to (D\star H,f_D)$ in $\Gra/G$. Indeed, 
  \[f_D \circ \phi_{(u,v)}^{D,H}(a)=f_D(u)=f(a);\]
  \[f_D \circ \phi_{(u,v)}^{D,H}(b)=f_D(v)=f(a)=f(b);\]
  \[f_D \circ \phi_{(u,v)}^{D,H}(w)=f_D(u,v,w)=f(w), \text{for }w\neq a,b,\]
  
  and therefore $f_D \circ \phi_{(u,v)}=f$ as required. 

Once again, the functionality of our construction is verified by its power to transfer algebraic universality under assumptions on the arrow. 

  \begin{theorem}\label{thm:arrow}
      Let $H,G$ be an undirected graph, $f:H \to G$ a homomorphism and $a,b$ two vertices of $H$ such that $f(a)=f(b)$. Suppose that for every digraph $D$ without isolated points the only homomorphisms $(H,f)\to (D \star H,f_D)$ in $\Gra/G$ are the maps $\phi_{(u,v)}^{D,H}$. Then $\Gra/G$ is algebraically universal.  
  \end{theorem}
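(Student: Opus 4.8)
The plan is to prove that the arrow functor $F \colon \Rel(2)^+ \to \Gra/G$ built in the preceding discussion — sending $D \mapsto (D \star H, f_D)$ and $h \mapsto h \star H$ — is a full embedding, and then to invoke algebraic universality of $\Rel(2)^+$ (the Fact in Section 2). Concretely, for any algebraic category $\Cfrak$ there is a full embedding $E \colon \Cfrak \to \Rel(2)^+$, and I would compose it with $F$ to obtain a full embedding $\Cfrak \to \Gra/G$; since full embeddings compose, this is exactly what is needed to conclude that $\Gra/G$ is algebraically universal. So the whole task reduces to checking that $F$ is faithful, full, and injective on objects.

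Faithfulness I expect to be immediate: if $h_1 \star H = h_2 \star H$, then restricting to the original vertices $D_1$ — where $h_i \star H$ agrees with $h_i$ — forces $h_1 = h_2$.

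The hard part will be fullness, and this is precisely where the hypothesis enters. Given digraphs $D_1, D_2$ without isolated points and a morphism $g \colon (D_1 \star H, f_{D_1}) \to (D_2 \star H, f_{D_2})$ in $\Gra/G$, I would, for each edge $(u,v) \in E(D_1)$, precompose $g$ with the inclusion $\phi_{(u,v)}^{D_1,H} \colon (H,f) \to (D_1 \star H, f_{D_1})$, which was shown above to be a slice morphism. The first thing to verify is that $g \circ \phi_{(u,v)}^{D_1,H}$ is again a morphism of $\Gra/G$, i.e. that it commutes with the maps to $G$; this is a one-line check using $f_{D_2}\circ g = f_{D_1}$ and $f_{D_1}\circ \phi_{(u,v)}^{D_1,H} = f$. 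The hypothesis then forces $g \circ \phi_{(u,v)}^{D_1,H} = \phi_{(u',v')}^{D_2,H}$ for some edge $(u',v') \in E(D_2)$, and evaluating at $a$, $b$, and the internal vertices yields $g(u) = u'$, $g(v) = v'$, and $g((u,v,w)) = (u',v',w)$. Because $D_1$ has no isolated points, every vertex lies on an edge, so $g$ maps $D_1$ into $D_2$; I would define $h := g\restr D_1$. The displayed values show $h$ is a digraph homomorphism (each edge of $D_1$ goes to the edge $(u',v')$ of $D_2$) and that $g = h \star H = F(h)$ on both original and internal vertices. The only subtlety to watch is well-definedness of $h$ when a vertex lies on several edges, but this is automatic since $h$ is literally the single function $g$ restricted to $D_1$.

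Finally, injectivity on objects should follow formally from fullness: if $F(D_1) = F(D_2)$, I would apply fullness to the identity of this shared object to get $h \colon D_1 \to D_2$ with $h \star H = \mathrm{id}$, whence $h = \mathrm{id}$ on vertices, giving $V(D_1) \subseteq V(D_2)$ and $E(D_1) \subseteq E(D_2)$; the symmetric argument yields $D_1 = D_2$. Overall the structure mirrors the classical strong-replacement proof of \Cref{thm:strongreplace}, with the slice-morphism condition $f_{D_2}\circ g = f_{D_1}$ doing the bookkeeping that rigidity and strong replacement handled there.
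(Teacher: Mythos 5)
Your proposal is correct and follows essentially the same route as the paper: both show the arrow functor $F\colon \Rel(2)^+ \to \Gra/G$ is fully faithful (faithfulness by restricting to the original vertices of $D_1$, fullness by precomposing a slice morphism $\chi$ with the maps $\phi_{(u,v)}^{D_1,H}$, invoking the hypothesis to identify each composite with some $\phi_{(u',v')}^{D_2,H}$, and using the absence of isolated points to assemble the restriction $\chi\restriction D_1$ into a digraph homomorphism) and then conclude from the algebraic universality of $\Rel(2)^+$. The only difference is that you also verify injectivity of $F$ on objects, a point the paper leaves implicit in claiming full embeddability.
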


  \begin{proof}
      We argue that the assignment $F:\Rel(2)^+ \to \Gra/G$ given by
    \begin{align*}
        D &\mapsto (D \star H,f_D) \\
        h &\mapsto h \star H
    \end{align*}
    is a fully faithful functor. Since $\Rel(2)^+$ is algebraically universal, this implies that $\Gra/G$ is algebraically universal as required. 

    It is clear that $F$ is functorial, so we first show faithfullness. Suppose that $h:D_1 \to D_2, g: D_1 \to D_2$ are two distinct homomorphisms, i.e. there is some $x \in D_1$ such that $h(x)\neq g(x)$. It follows that $h \star H(x)=h(x)\neq g(x)=g\star H(x)$, and therefore $h \star H \neq g \star H$.

    We also argue for fullness. Let $\chi:(D_1 \star H,f_{D_1}) \to (D_2\star H,f_{D_2})$ be a morphism in $\Gra/G$. It follows by the assumption that for every edge $(u,v) \in E(D_1)$ there is an edge $(u',v') \in E(D_2)$ such that $\chi\circ \phi_{(u,v)}=\phi_{(u',v')}$. In particular, we obtain that 
    \[ u' = \phi_{(u',v')}(a) = \chi\circ \phi_{(u,v)}(a) = \chi(u), \text{ and}\]
    \[ v' = \phi_{(u',v')}(b) = \chi\circ \phi_{(u,v)}(b) = \chi(v).\]
    Moreover, since $D_1$ has no isolated points it follows that every $u \in D_1$ appears in some edge. Hence the map $g:=\chi_{\restriction D_1}$ is a well-defined homomorphism $D_1 \to D_2$. We argue that $\chi = g \star H$. It is clear by construction that $\chi$ and $g \star H$ agree on $D_1$, so let $x = (u,v,w) \in D_1$ for $(u,v) \in E(D_1)$ and $w \in H\setminus\{a,b\}$. Then:
    \[ \chi(x)=\chi \circ \phi_{(u,v)}(w) = \phi_{(g(u),g(v))}(w) = g \star H(x),\]
    implying that $\chi = g \star H$ as required. 
  \end{proof}

\section{Main theorem}

The machinery developed above allow us to prove our main theorem, which we state once more below. 
\begin{theorem}\label{thm:main}
    The following are equivalent for an undirected graph $G$:
    \begin{enumerate}
        \item $G$ is not a subgraph of a disjoint union of paths of length three;
        \item $G$ contains one of $C_3, C_4, P_4$, or $Y$ as a subgraph; 
        \item The slice category $\Gra/G$ is algebraically universal. 
    \end{enumerate}
\end{theorem}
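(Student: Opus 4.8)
The plan is to establish the cycle of implications (2)$\Rightarrow$(3) and $\neg$(2)$\Rightarrow\neg$(3), treating the equivalence (1)$\Leftrightarrow$(2) as a routine case analysis: a graph embeds into a disjoint union of paths of length three exactly when it is a linear forest all of whose components have at most three edges, and each way this can fail---a cycle, a vertex of degree at least three, or a path of length four---is witnessed by $C_3$, $C_4$, $Y$, or (via a long cycle or long path) by $P_4$.

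For (2)$\Rightarrow$(3) I would first reduce to the four ``atomic'' targets. If $T$ is a subgraph of $G$, the inclusion $\iota\colon T\hookrightarrow G$ is an injective homomorphism, and for a gadget $(H,a,b)$ with $f_0\colon H\to T$ satisfying $f_0(a)=f_0(b)$ one checks directly that $f_D=\iota\circ (f_0)_D$ for every digraph $D$. Since $\iota$ is injective, a map between arrow objects commutes with the projections to $G$ if and only if it commutes with those to $T$; hence the slice-homomorphisms $(H,f)\to(D\star H,f_D)$ in $\Gra/G$ are literally the same maps as in $\Gra/T$. It therefore suffices to exhibit, for each $T\in\{C_3,C_4,P_4,Y\}$, a gadget $(H_T,a_T,b_T)$ with $f_T\colon H_T\to T$ identifying $a_T$ and $b_T$ whose only slice-homomorphisms $(H_T,f_T)\to(D\star H_T,(f_T)_D)$ are the canonical copies $\phi^{D,H_T}_{(u,v)}$; \Cref{thm:arrow} then gives algebraic universality of $\Gra/T$, and so of $\Gra/G$. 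The case $T=C_3$ is essentially free: take $H_{C_3}=G_2$ from \Cref{fig:Gk}. By \Cref{factrigid} it is rigid and a strong replacement graph, so any slice-homomorphism, being in particular a graph homomorphism, already has image in a single copy by strong replacement, and rigidity pins it to some $\phi_{(u,v)}$.

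The genuine work, and the main obstacle, is the construction of gadgets for the \emph{bipartite} targets $C_4,P_4,Y$. Here $H_T$ is necessarily bipartite and hence never rigid as a bare graph---any connected bipartite graph on at least three vertices retracts onto an edge---so all rigidity must be manufactured by the colouring $f_T$. I would design $(H_T,f_T)$ so that (i) no colour-preserving homomorphism into $D\star H_T$ can straddle two distinct copies (the copies meet only along vertices of $D$, all of colour $f_T(a_T)$, so the colour pattern must act as a barrier), and (ii) within a single copy the identity is the only colour-preserving self-map. The leverage available is exactly the colour structure of the target: $C_4$ offers four colours arranged \emph{cyclically}, $P_4$ five colours arranged linearly, and $Y$ a centre colour together with three \emph{distinct} leaf-colours. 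Verifying (i) and (ii) for explicit $H_T$, which I would present via figures as with $G_k$, is the core computation.

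For $\neg$(2)$\Rightarrow\neg$(3), suppose $G$ is a disjoint union of paths of length at most three and take an arbitrary object $(H,f)\in\Gra/G$. The aim is to show it is rigid or admits a proper endomorphism; then any object whose endomorphism monoid is a group must be rigid with trivial monoid, so no nontrivial group---e.g.\ $\Zbb/2\Zbb$---is realised, and $\Gra/G$ is not strongly group-universal, hence not algebraically universal. Restricting to a connected component we may assume $f\colon H\to P$ for a single path $P$ of length at most three, so the colours $\{0,1,2,3\}$ split $V(H)$ into at most four levels with edges only between consecutive levels. The plan is to use this linear level structure to \emph{collapse} any non-rigid object toward an endpoint: instead of a single fold (two equally coloured vertices with nested neighbourhoods, which need not exist), one pushes $H$ onto a shorter colour-interval, yielding a colour-preserving non-injective endomorphism. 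I expect the delicate point to be showing that a non-trivial colour-preserving \emph{automorphism} already forces such a collapse; the argument must exploit that there are only four levels and that $P$ has an endpoint, since over a path of length four the slice is already universal and the same collapse is unavailable. Equivalently, one shows every core of $\Gra/G$ is asymmetric, the linear order on the at most four colours leaving no room for a symmetric core.
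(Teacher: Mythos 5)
Your overall architecture matches the paper's---reduction to the four minimal graphs plus the arrow construction (\Cref{thm:arrow}) for universality, and the ``rigid or proper endomorphism'' dichotomy for non-universality---and your reduction of (2)$\Rightarrow$(3) to the targets $C_3,C_4,P_4,Y$ via injectivity of the inclusion is sound (it is the same observation as the paper's fully faithful functor $\Gra/G_1\to\Gra/G_2$). Your $C_3$ case also works, by a different route from the paper: the paper uses the small gadget $P_3\to C_3$ wrapped around the triangle, whereas you recycle $G_2$ together with \Cref{factrigid}, which pins down all graph homomorphisms into $D\star G_2$, not just the slice ones. But for $C_4$, $P_4$ and $Y$ you stop at a specification: you state conditions (i) and (ii) that a gadget must satisfy and declare their verification ``the core computation'' without exhibiting any gadget or verifying anything. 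That computation is precisely the substance of this direction in the paper (the homomorphism $P_4\to C_4$ wrapped around the $4$-cycle, the folded path $P_{12}\to P_4$, and the folded path $P_6\to Y$, each with a several-step argument forcing every slice homomorphism onto a canonical copy), so this half of your proof is missing, not merely compressed.

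The non-universality direction has a more serious defect: the mechanism you propose cannot work as stated. A slice endomorphism $\phi$ of $(H,f)$ satisfies $f\circ\phi=f$, hence $f[\phi[H]]=f[H]$; the set of colours realised by the image is invariant, so no colour-preserving endomorphism can ``push $H$ onto a shorter colour-interval.'' The paper's \Cref{lem:pathslice} collapses $H$ in the orthogonal direction: it takes a \emph{shortest} path $P$ in $H$ joining a vertex of colour $0$ to a vertex of colour $3$ (such a path realises all four colours, generally with repetitions) and constructs an explicit retraction $(H,f)\to(P,f_{\restriction P})$ using the distance-to-colour-$0$ function $\tau$; rigidity versus proper endomorphism is then decided by whether $P=H$. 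Nothing in your sketch produces this (or any) retraction, and you yourself flag the key step---that a nontrivial colour-preserving automorphism forces a collapse---as unresolved. Finally, your opening move ``restricting to a connected component we may assume\dots'' needs an argument: the dichotomy for components does imply it for $(H,f)$ (if some component has a proper endomorphism, extend it by the identity; if all components are rigid, then either some slice homomorphism between two distinct components exists, which extended by the identity is a proper endomorphism, or none does, and then every endomorphism preserves each component and $(H,f)$ is rigid), but this is a lemma, not a triviality---the paper devotes \Cref{lem:rigidslice} and \Cref{cor:comparable} to exactly this point, and your version of it silently assumes what those results establish.
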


The proof is divided in two parts. For non-universality, we establish that if $G$ is a subgraph of a disjoint union of paths of length three then every object in $\Gra/G$ is either rigid or it has a proper endomorphism. The idea is to first show this for a single copy of $P_3$ and for objects $(H,f)$ such that $H$ is connected and $f$ is surjective. We then deduce the general case from this. For universality, we reduce the problem to the case when $G$ is one of the four specified graphs, and in each case describe an appropriate gadget that satisfies the conditions of \Cref{thm:arrow}. 

\subsection{Non-universality}

\begin{lemma}\label{lem:rigidslice}
    Let $G$ be a graph, and $(H,f)$ an object in $\Gra/G$. Assume that for every connected component $A$ of $H$ the object $(A,f_{\restriction A})$ is rigid in $\Gra/G$, while for every connected component $B\neq A$ it holds that $f[A]\not\subseteq f[B]$ and $f[B] \not\subseteq f[A]$. Then $(H,f)$ is rigid. 
\end{lemma}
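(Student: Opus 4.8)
The plan is to show directly that any endomorphism $\psi$ of $(H,f)$ in $\Gra/G$ must be the identity. Recall that such a $\psi$ is a graph homomorphism $\psi : H \to H$ satisfying $f \circ \psi = f$. The strategy is to first argue that $\psi$ respects the decomposition of $H$ into connected components, and then to invoke the rigidity of each component separately.

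First I would record the elementary fact that the homomorphic image of a connected graph is connected, so for each connected component $A$ of $H$ the set $\psi[A]$, being a connected subset of $H$, lies inside a single connected component $B$ of $H$. The key step is then to show $B = A$. From $f \circ \psi = f$ we get $f[\psi[A]] = f[A]$, while $\psi[A] \subseteq B$ gives $f[\psi[A]] \subseteq f[B]$; combining these yields $f[A] \subseteq f[B]$. Since the hypothesis stipulates $f[A] \not\subseteq f[B]$ for every component $B \neq A$, we must have $B = A$. Thus $\psi$ maps each connected component of $H$ into itself.

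Having established this, the restriction $\psi_{\restriction A} : A \to A$ is a graph homomorphism, and it satisfies $f_{\restriction A} \circ \psi_{\restriction A} = f_{\restriction A}$ (simply the restriction of $f \circ \psi = f$ to $A$), so it is an endomorphism of $(A, f_{\restriction A})$ in $\Gra/G$. By the assumed rigidity of $(A, f_{\restriction A})$, we conclude $\psi_{\restriction A} = \id_A$. As this holds for every component $A$, and these partition the vertex set of $H$, we obtain $\psi = \id_H$, proving that $(H,f)$ is rigid.

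I do not anticipate a serious obstacle here; the only points demanding a little care are the claim that $\psi[A]$ lands in a single component, which relies on connectedness of homomorphic images, and the observation that the incomparability hypothesis is used in exactly the right direction ($f[A] \not\subseteq f[B]$) to rule out $\psi$ collapsing one component into another. Note that only one half of the symmetric incomparability assumption is strictly needed for the argument.
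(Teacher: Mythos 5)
Your proof is correct and follows essentially the same route as the paper's: connectedness of homomorphic images together with the incomparability hypothesis forces every endomorphism of $(H,f)$ to map each component into itself, and rigidity of the components then finishes the argument. The paper merely phrases this by contradiction (a non-identity endomorphism either moves a point within a component, contradicting that component's rigidity, or sends one component into another, yielding $f[A]\subseteq f[B]$ against the hypothesis), so the two arguments are the same in substance.
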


\begin{proof}
    Assume for a contradiction that $\phi:(H,f) \to (H,f)$ has a non-trivial endomorphism, i.e. one such that there are $a\neq b$ from $H$ satisfying $\phi(a)=b$. It follows that $a$ and $b$ lie in different connected components of $H$; indeed, if they were in the same component $A$ then the map $\phi_{\restriction A}$ would be a non-trivial endomorphism of $(A,f_{\restriction A})$, contradicting its rigidity. So, there are disjoint connected components $A,B$ of $H$ such that $a \in A$ and $b \in B$. Since homomorphisms preserve connectedness, it follows that $\phi[A]\subseteq \phi[B]$. Consequently, $f[A] \subseteq f[B]$, contradicting the assumption. 
\end{proof}

\begin{lemma}\label{lem:pathslice}
    Fix $n \in \{0,1,2,3\}$. Let $H$ be a connected graph and $f:H \to P_n$ a surjective homomorphism. Then either $(H,f)$ is rigid in $\Gra/P_n$ or it has a proper endomorphism. 
\end{lemma}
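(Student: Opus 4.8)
The plan is to exploit the \emph{layered structure} that $f$ imposes on $H$. Writing $L_i = f^{-1}(i)$ for the fibres, the fact that $f$ is a homomorphism into the path $P_n$ forces every edge of $H$ to join two consecutive layers $L_i, L_{i+1}$; surjectivity makes every layer nonempty, and connectedness provides at least one edge across each consecutive pair. Any endomorphism $\phi$ of $(H,f)$ in $\Gra/P_n$ satisfies $f\circ\phi = f$ and so preserves layers, $\phi(L_i)\subseteq L_i$. Now a proper endomorphism is by definition one that is not an automorphism, and any non-injective endomorphism qualifies; so it suffices to prove that if every endomorphism of $(H,f)$ is an automorphism then $\Aut(H,f)$ is trivial. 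I therefore assume throughout that $(H,f)$ has no proper endomorphism and aim to show that it is rigid.

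Under this assumption I would first extract two structural constraints, each by exhibiting a forbidden proper endomorphism. First, no two distinct vertices $u,v$ in a common layer can satisfy $N(u)\subseteq N(v)$: otherwise the \emph{fold} fixing everything except sending $u\mapsto v$ is a layer-preserving, non-injective endomorphism. Hence within each layer the neighbourhoods form an antichain under inclusion. Second, suppose $H$ contains a \emph{monotone path}, i.e. vertices $p_0,\dots,p_n$ with $p_i\in L_i$ and $p_i$ adjacent to $p_{i+1}$. Then the \emph{collapse} sending each layer $L_i$ entirely onto $p_i$ is a layer-preserving endomorphism, and it is non-injective unless every layer is a singleton, that is, unless $H=P_n$. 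So the existence of a monotone path forces $H=P_n$, which is manifestly rigid.

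These two moves already settle $n\in\{0,1,2\}$. For $n=0$ connectedness forces $H$ to be a single vertex; for $n=1$ any edge across $L_0$ and $L_1$ is a monotone path; and for $n=2$ the last three vertices of a shortest path from $L_0$ to $L_2$ necessarily lie in $L_0,L_1,L_2$, furnishing a monotone path. In each case $H=P_n$, and we are done. The genuine content of the lemma therefore lies in $n=3$, and here the obstacle is real: a connected, surjectively layered graph over $P_3$ need \emph{not} contain a monotone path — for instance a path with layer sequence $0,1,2,1,2,3$ — and such graphs can themselves be rigid cores strictly larger than $P_3$, so the collapse move no longer suffices.

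For $n=3$ the plan is to split on whether a monotone path exists: if it does, the collapse forces $H=P_3$ and rigidity as above. If it does not, I would analyse the interface created by this failure. Letting $U\subseteq L_2$ be the vertices reachable from $L_0$ by an increasing path through $L_1$, the absence of a monotone path means that $L_3$ attaches only to $L_2\setminus U$, and dually at the bottom; any automorphism preserves this reachability partition, while the antichain condition forbids identifying vertices inside a layer. The hard part, and the main obstacle, is to show that a nontrivial automorphism of such a graph must permute isomorphic branches meeting at this interface, so that retracting one branch onto its image is a non-injective, hence proper, endomorphism — contradicting the standing assumption. Carrying this out requires a careful case analysis of how the layers $L_1,L_2$ can attach to the extreme layers $L_0,L_3$ (which, lacking downward resp.\ upward neighbours, tend to pin each other down); once every such symmetry is ruled out, $\Aut(H,f)$ collapses to the identity and $(H,f)$ is rigid.
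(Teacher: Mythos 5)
Your reduction of the problem and your handling of $n\in\{0,1,2\}$ are fine: the fold move (identifying $u$ with $v$ when $N(u)\subseteq N(v)$ inside a layer) and the collapse onto a monotone path are both legitimate proper endomorphisms, and for $n\le 2$ a monotone path always exists, so those cases close. But the proof has a genuine gap exactly where you yourself locate the ``genuine content'': for $n=3$, in the case where no monotone path exists, you do not give an argument. The final paragraph is a plan, not a proof --- ``the hard part \dots is to show that a nontrivial automorphism of such a graph must permute isomorphic branches meeting at this interface, so that retracting one branch onto its image is a non-injective endomorphism'' is precisely the assertion that needs to be established, and nothing in the reachability-partition setup ($U\subseteq L_2$, the antichain condition, etc.) yields it. It is not even clear that the claim is true as stated: an automorphism need not decompose $H$ into ``branches'' with a fixed interface such that replacing one branch by its image is again a homomorphism, and no mechanism is offered for producing a proper endomorphism from a nontrivial automorphism in this situation. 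So the lemma remains unproved in the only case that matters.

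The missing idea is that one should not look for a monotone path at all. The paper instead takes a \emph{shortest} path $P=u_0,\dots,u_k$ in $H$ from $f^{-1}(0)$ to $f^{-1}(3)$; minimality forces its labels to follow the zigzag pattern $0,1,2,1,2,\dots,1,2,3$. One then retracts all of $H$ onto $P$: setting $\tau(z)=\min\{\mathrm{dist}_H(z,w): f(w)=0\}$, map $x$ to the vertex $u_i$ of $P$ with $f(u_i)=f(x)$ and $|\tau(x)-i|$ minimal. Since every edge of $H$ joins consecutive layers, $\tau$ changes by exactly $1$ along an edge and $\tau(x)\equiv f(x)\pmod 2$, which makes this map a well-defined homomorphism over $P_3$ fixing $P$ pointwise. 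If $P\subsetneq H$ this retraction is a proper endomorphism; if $P=H$, minimality gives rigidity. This single construction subsumes your monotone/non-monotone case split and avoids any analysis of the automorphism group, which is where your approach stalls.
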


\begin{proof}
    The cases $n=0,1,2$ follow by the observation that, for such $n$, any connected graph $H$ surjectively mapping to $P_n$ via some homomorphism $f$ contains a connected subgraph $A$ such that the restriction of $f$ on $A$ is an isomorphism. In particular, it follows that $(H,f)$ homomorphically maps to $(A,f_{\restriction A})$. Therefore, if $A$ is a proper substructure this gives a proper endomorphism of $(H,f)$; otherwise $A=H$ and therefore $(H,f)$ is rigid since $f$ is in fact an isomorphism. 
    
    We proceed to show this for $n=3$. Write $\{0,1,2,3\}$ for the vertices of $P_3$. Since $f$ is surjective, it follows that there is a path $P=u_0,\dots,u_k$ in $H$ of shortest length such that $f(u_0)=0$ and $f(u_k)=3$. Minimality ensures that $k=2c+3$ for some $c \in \N$ and moreover
      \begin{equation}\tag{$*$}\label{eq:*}
    f(u_i) =
    \begin{cases*}
      0, & if $i=0$; \\
      1, & if $i \in [k-1]$ is odd; \\
      2, & if $i \in [k-1]$ is even; \\
      3, & if $i=k$.
    \end{cases*}
  \end{equation}
    
    We argue that there is a homomorphism $(H,f) \to (P,f_{\restriction P})$. Indeed, let $\tau:H \to \N$ be the map 
    \[ \tau(z) = \min\{\text{dist}_H(z,w) : w \in H, f(w)=0\}\]
    
    and define $\phi: H \to P$ by letting $\phi(x)$ be the unique vertex $u_i$ of $P$ with $f(x)=f(u_i)$ and such that $|\tau(x)-i|$ is minimal. 

    It is clear that $\phi$ is the identity on $P$, while $f = f \circ \phi$. Moreover, $\phi$ is a graph homomorphism: let $(x,y)$ be an edge in $H$, and assume that $\phi(x)=u_i$. Since $(f(x),f(y))$ is an edge in $P_3$, it follows that either $\tau(y)=\tau(x)+1$ or $\tau(x)=\tau(y)-1$. In the former case $\phi(y)=u_{i+1}$ since $|\tau(y)-(i+1)|=|\tau(x)-i|$ is minimal, while in the latter $\phi(y)=u_{i-1}$ since $|\tau(y)-(i-1)|=|\tau(x)-i|$ is minimal. In either case, $(\phi(x),\phi(y))$ is an edge in $P$ as required. 

    It follows that $\phi$ is a retract of $(H,f)$ on $(P,f_{\restriction P})$. This implies that if $P$ is a proper substructure of $H$ then $H$ has proper endomorphism, while if $H=P$ minimality ensures that $H$ is rigid. 
\end{proof}

The proof above also illustrates that any two connected objects in $\Gra/P_3$ which have $\subseteq$-comparable images are homomorphism-comparable. 

\begin{corollary}\label{cor:comparable}
    Let $(A,f),(B,g)$ be connected objects in $\Gra/P_3$ such that $f[A]\subseteq g[B]$. Then there is either a homomorphism $(A,f) \to (B,g)$ or a homomorphism $(B,g) \to (A,f)$. 
\end{corollary}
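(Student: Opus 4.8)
The plan is to reduce each of $(A,f)$ and $(B,g)$ to a canonical \emph{path object} by invoking the retract constructed in the proof of \Cref{lem:pathslice}, and then to compare these two path objects directly.

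First, I would extract from the proof of \Cref{lem:pathslice}, applied to $f$ regarded as a surjection onto its image, the following. Since $A$ is connected, $f[A]$ is a connected subgraph of $P_3$, hence a subpath isomorphic to some $P_m$ with $m\in\{0,1,2,3\}$, and the construction yields a subgraph $P_A\subseteq A$ together with a retraction $r_A\colon (A,f)\to (P_A,f\restriction P_A)$ in $\Gra/P_3$. Here $P_A$ is a path whose induced labelling is an isomorphism onto $f[A]$ when $m\le 2$, and is the zigzag $0,1,2,1,2,\dots,1,2,3$ with $2c+3$ edges realising $f[A]=P_3$ when $m=3$; in all cases $f[P_A]=f[A]$. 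As $P_A$ is a subgraph of $A$, the inclusion $\iota_A\colon(P_A,f\restriction P_A)\hookrightarrow(A,f)$ is also a morphism of $\Gra/P_3$. Applying the same to $(B,g)$ produces $P_B$, $r_B$, and $\iota_B$, and we record that $\mathrm{im}(P_A)=f[A]\subseteq g[B]=\mathrm{im}(P_B)$.

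Second, I would establish the key comparison: whenever $P$ and $Q$ are canonical path objects of the above form with $\mathrm{im}(P)\subseteq\mathrm{im}(Q)$, there is a morphism $P\to Q$ or $Q\to P$ in $\Gra/P_3$. This is a finite case analysis on the two images. If $\mathrm{im}(P)$ is a single vertex, an edge, or a $P_2$ strictly contained in $\mathrm{im}(Q)$, one simply locates a labelled copy of $P$ inside $Q$ (e.g.\ the initial or terminal segment of a zigzag), giving $P\to Q$; if the images are equal and of size at most three, the two canonical paths are isomorphic. The one genuinely nontrivial case is $\mathrm{im}(P)=\mathrm{im}(Q)=P_3$, where $P$ and $Q$ are zigzags with $2c+3$ and $2d+3$ edges respectively. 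Here, assuming $c\ge d$, I would exhibit a \emph{folding} $P\to Q$: send $p_i\mapsto q_i$ for $0\le i\le 2d+2$, oscillate between the interior vertices $q_{2d+1},q_{2d+2}$ for the remaining $2(c-d)$ steps, and finish with the edge to $q_{2d+3}$. This is label-preserving (odd indices carry label $1$, even ones label $2$) and adjacency-preserving by construction. I expect this folding to be the main obstacle, as it is the only place where the two underlying graphs genuinely differ and where the direction of the homomorphism matters: note that $(P_3,\mathrm{id})$ does \emph{not} map to a longer zigzag, so the map must run from the longer path to the shorter one and the direction cannot be reversed.

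Finally, I would combine the two parts. Writing $\theta$ for the morphism between $P_A$ and $P_B$ supplied by the comparison, if $\theta\colon P_A\to P_B$ then $\iota_B\circ\theta\circ r_A$ is a morphism $(A,f)\to(B,g)$, while if $\theta\colon P_B\to P_A$ then $\iota_A\circ\theta\circ r_B$ is a morphism $(B,g)\to(A,f)$; in either case \Cref{cor:comparable} follows.
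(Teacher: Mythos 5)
Your proof is correct, and its skeleton is the same as the paper's: retract each of $(A,f)$ and $(B,g)$ onto a canonical labelled path extracted from the proof of \Cref{lem:pathslice}, compare the two paths, and then compose with the retraction on one side and the inclusion on the other. There are two genuine differences. First, the paper performs the retraction only in the case $f[A]=g[B]=P_3$; when $f[A]\subsetneq P_3$ it instead finds, directly inside $B$, a connected subgraph $H$ on which $g$ restricts to an isomorphism onto $f[A]$, and takes $(g_{\restriction H})^{-1}\circ f$ as the required morphism $(A,f)\to(B,g)$. Your uniform reduction to the canonical paths $P_A$ and $P_B$ buys a small simplification here: locating a labelled copy of $P_A$ inside the explicit path or zigzag $P_B$ (an initial or terminal segment) is immediate, whereas locating a copy of $f[A]$ inside an arbitrary connected $B$ needs a short walk argument. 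Second, and more substantively, in the case $f[A]=g[B]=P_3$ your account of the comparison between the two zigzags is the correct one, while the paper's is misstated: the paper claims that ``any path of this form homomorphically maps to a longer one also of this form,'' but, exactly as you observe, a shorter zigzag cannot map to a longer one (its unique $0$-labelled and $3$-labelled vertices would have to land on the corresponding vertices of the target, which are farther apart than the length of the source path), and it is in fact the \emph{longer} zigzag that folds onto the \emph{shorter} one. Your explicit folding --- $p_i\mapsto q_i$ for $i\le 2d+2$, then parity-respecting oscillation between $q_{2d+1}$ and $q_{2d+2}$, then the final step to $q_{2d+3}$ --- is precisely the map that is needed. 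This slip in the paper is harmless, since the corollary's conclusion is symmetric in $(A,f)$ and $(B,g)$, so either direction of comparability suffices; but your write-up gets the direction right where the paper does not.
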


\begin{proof}
    Let $(A,f),(B,g)$ be as above. We distinguish two cases. If $f[A]=f[B]=P_3$ then by the argument above $(A,f)$ and $(B,g)$ retract onto paths $(P_A,f_{\restriction P_A})$ and $(P_B,g_{\restriction P_B})$ respectively, satisfying (\ref{eq:*}). Clearly, any path of this form homomorphically maps to a longer one also of this form. It follows that either $(P_A,f_{\restriction P_A})\to(P_B,g_{\restriction P_B})$ or $(P_B,g_{\restriction P_B})\to(P_A,f_{\restriction P_A})$ and therefore $(A,f) \to (B,g)$ or $(B,g) \to (A,f)$ by composing homomorphisms. 

    On the other hand if $f[A]\subsetneq P_3$ then $f[A]$ is isomorphic to $P_n$ for $n \in \{0,1,2\}$. Therefore, since $g[B]\supseteq f[A]$, there is some connected $H\subseteq B$ on which $g$ is an isomorphism with $f[A]$. It follows that $(g_{\restriction H})^{-1}\circ f$ is a homomorphism from $(A,f)$ to $(B,g)$.
\end{proof}

\begin{theorem}\label{thm:nonuni}
    Let $G$ be a subgraph of $\coprod_{\kappa} P_3$ for some cardinal $\kappa$. Then $\Gra/G$ is not algebraically universal. 
\end{theorem}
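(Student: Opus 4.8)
The plan is to prove the stronger statement that $\Gra/G$ is not even strongly group-universal. This suffices: algebraic universality implies monoid-universality (as recalled in the introduction), and monoid-universality implies strong group-universality since every group is in particular a monoid, so realising all monoids as endomorphism monoids realises, in particular, every group as an endomorphism monoid that is itself a group. Concretely, I will show that every object $(H,f)\in\Gra/G$ is either rigid or carries a proper endomorphism. Granting this, the endomorphism monoid of any object is either trivial (when rigid) or contains a non-invertible element (a proper endomorphism can have no inverse in the monoid, as such an inverse would exhibit it as an automorphism); hence the only group arising as an endomorphism monoid that happens to be a group is the trivial one, so $\Z/2\Z$ is not realised and strong group-universality fails.

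To establish the dichotomy, I would decompose $H$ into its connected components $(H_i)_{i\in I}$. Since $G\subseteq\coprod_\kappa P_3$, every connected component of $G$ is isomorphic to $P_n$ for some $n\in\{0,1,2,3\}$, and because $H_i$ is connected its image $f[H_i]$ is a connected subgraph of a single component of $G$, hence isomorphic to some $P_{m_i}$ with $m_i\le 3$. Viewing $f_{\restriction H_i}$ as a surjection onto $P_{m_i}$, \Cref{lem:pathslice} applies to each component, giving that each $(H_i,f_{\restriction H_i})$ is either rigid or has a proper endomorphism.

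The argument then splits into three cases. First, if some component $(H_i,f_{\restriction H_i})$ admits a proper endomorphism $\psi_i$, I extend $\psi_i$ by the identity on all other components; this yields a graph homomorphism commuting with $f$ that maps each component into itself, and it is not an automorphism, because any automorphism of $H$ mapping each component into itself must restrict to an automorphism of $H_i$, contrary to the choice of $\psi_i$. Otherwise every component is rigid, and I distinguish by comparability of images. If all pairs of distinct components have $\subseteq$-incomparable images, then \Cref{lem:rigidslice} immediately gives that $(H,f)$ is rigid. If instead some two distinct components $A\ne B$ satisfy, say, $f[A]\subseteq f[B]$, then $f[A]$ and $f[B]$ lie in a common component of $G$, which embeds into $P_3$, so \Cref{cor:comparable} supplies a homomorphism between $(A,f_{\restriction A})$ and $(B,f_{\restriction B})$; mapping one of these components into the other along this homomorphism and fixing every other component produces an endomorphism of $(H,f)$ whose image omits an entire component, hence is not surjective and so is a proper endomorphism.

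The main obstacle, and the reason the component analysis is genuinely needed rather than a direct appeal to \Cref{lem:rigidslice}, is precisely the case of rigid components with comparable images: rigidity of the pieces does not prevent collapsing one component onto another, and the role of \Cref{cor:comparable} is to guarantee that comparability of images forces a homomorphism in at least one direction, which is exactly what lets us build the non-surjective endomorphism. Care is also required in verifying that each constructed map is genuinely a morphism in $\Gra/G$ (that is, commutes with $f$) and genuinely proper (not an automorphism); both reduce to the observation that the constructed maps respect the partition of $H$ into components and are non-bijective on that partition.
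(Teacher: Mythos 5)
Your proof is correct and takes essentially the same route as the paper's: you decompose $H$ into connected components, apply \Cref{lem:pathslice} to each component, invoke \Cref{cor:comparable} to rule out (or exploit) comparable images of rigid components, and conclude rigidity of $(H,f)$ via \Cref{lem:rigidslice}, exactly as the paper does in establishing the rigid-or-proper-endomorphism dichotomy. The only differences are presentational: you organise the argument as a three-way case split rather than by assuming all endomorphisms are automorphisms, and you spell out the implication chain (alg-universal $\Rightarrow$ monoid-universal $\Rightarrow$ strongly group-universal) and the properness verifications that the paper leaves implicit.
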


\begin{proof}
    Let $G$ be as above. We argue that every $(H,f)$ from $\Gra/G$ either has a proper endomorphism or it is rigid; this implies that $\Gra/G$ is not even strongly group-universal. 

    Fix $(H,f) \in \obj(\Gra/G)$, and assume that every endomorphism of $(H,f)$ is an automorphism. Write $H = \coprod_{i<\lambda} H_i$ into a disjoint union of its connected components and let $f_i = f_{\restriction{H_i}}$ be the restriction of $f$ on $H_i$. We first argue that each object $(H_i,f_i)$ is rigid. Since $H_i$ is connected, it follows that $f_i[H_i]$ is connected, and so it is one of $P_n$ for $n \in \{0,1,2,3\}$. Consequently, \Cref{lem:pathslice} implies that $(H_i,f_i)$ is either rigid or it has a proper endomorphism. Since any proper endomorphism of $(H_i,f_i)$ may be extended to a proper endomorphism of $(H,f)$, our assumption implies that $(H_i,f_i)$ is indeed rigid. 

    We additionally argue that $f[H_i]\not\subseteq f[H_j]$ for all $i \neq j$. Indeed, assume for a contradiction that there are $i\neq j$ from $\lambda$ such that $f[H_i] \subseteq f[H_j]$. Since $H_i,H_j$ are connected their image is contained in the same copy of $P_3$, and so it follows by \Cref{cor:comparable} that there is either a homomorphism $(H_i,f_i) \to (H_j,f_j)$ or a homomorphism $(H_j,f_j) \to (H_i,f_i)$. In either case, by extending with the identity homomorphism, this gives rise to a proper endomorphism of $(H,f)$ which is a contradiction. Consequently, it follows by \Cref{lem:rigidslice} that $(H,f)$ is rigid as required. 
\end{proof}

\subsection{Universality} 

\begin{lemma}\label{lem:c3}
    $\Gra/C_3$ is algebraically universal.
\end{lemma}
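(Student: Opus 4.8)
The plan is to apply \Cref{thm:arrow} with a single well-chosen gadget over $C_3$. Since $C_3 = C_{2k-1}$ for $k=2$, the natural candidate is $H = G_2$, the graph $G_k$ of \Cref{fig:Gk} specialised to $k=2$, equipped with the homomorphism $f \colon G_2 \to C_3$ read off from that figure. By construction this homomorphism sends both distinguished vertices $a$ and $b$ to the vertex of $C_3$ labelled $k=2$, so that $f(a) = f(b)$; this is exactly the compatibility needed for $(D \star G_2, f_D)$ to be a well-defined object of $\Gra/C_3$ for every digraph $D$. Thus the hypotheses of \Cref{thm:arrow} concerning the projection to $G$ are satisfied, and it remains only to verify the rigidity condition: that for every digraph $D$ without isolated points the sole slice-homomorphisms $(G_2, f) \to (D \star G_2, f_D)$ are the canonical maps $\phi_{(u,v)}^{D,G_2}$.

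First I would observe that this rigidity condition is inherited directly from the plain-graph behaviour of $G_2$, with no further use of the colouring. Indeed, by \Cref{factrigid} the graph $G_2$ is rigid and a strong replacement graph with respect to $a, b$. Hence for every irreflexive digraph $D$ and every graph homomorphism $\chi \colon G_2 \to D \star G_2$ --- in particular for every slice-homomorphism, which is such a $\chi$ in addition to commuting with the projections --- strong replacement supplies an edge $(u,v) \in E(D)$ with $\chi[G_2] \subseteq \phi_{(u,v)}^{D,G_2}[G_2]$. Since $\phi_{(u,v)}^{D,G_2}$ is an injective strong homomorphism it is an isomorphism onto its image, so $(\phi_{(u,v)}^{D,G_2})^{-1} \circ \chi$ is a graph endomorphism of $G_2$; rigidity forces this to be the identity, whence $\chi = \phi_{(u,v)}^{D,G_2}$. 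Conversely each $\phi_{(u,v)}^{D,G_2}$ is a slice-homomorphism, as verified in \Cref{sec:arrowslice}. Therefore the slice-homomorphisms are exactly the canonical maps.

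The one point requiring care is the quantifier mismatch: \Cref{thm:arrow} ranges over all digraphs without isolated points, whereas the strong replacement property of \Cref{factrigid} is guaranteed only for irreflexive $D$ (and indeed $\phi_{(u,u)}$ fails to be injective at a loop, so the argument above genuinely needs irreflexivity). I would resolve this by running the proof of \Cref{thm:arrow} with $\Rel(2)^+$ replaced by its full subcategory on irreflexive digraphs without isolated points, which remains algebraically universal; the restriction of the functor $D \mapsto (D \star G_2, f_D)$ to this subcategory is then still a full embedding into $\Gra/C_3$ by the argument just given, and universality of $\Gra/C_3$ follows. The main obstacle is thus not the rigidity analysis --- which is imported wholesale from \cite{neshell} via \Cref{factrigid} --- but rather correctly matching $G_2$ to $C_3$ through the colouring $f$ and handling this loop technicality; everything else is the formal mechanism of \Cref{thm:arrow}.
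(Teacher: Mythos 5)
Your proof is correct, but it takes a genuinely different route from the paper's. The paper uses a much smaller gadget: the path $P_3$ with vertices $a,b,c,d$ coloured $0,1,2,0$ by $f\colon P_3\to C_3$. There the slice structure does all the work: since $b$ is the unique vertex of colour $1$, any slice homomorphism $\chi$ into $(D\star P_3,f_D)$ must send $b$ to some $(u,v,b)$, and the colouring then forces $\chi(a)=u$, $\chi(c)=(u,v,c)$, $\chi(d)=v$, i.e.\ $\chi=\phi_{(u,v)}^{D,P_3}$. Note that $P_3$ is not rigid as a graph; it is the slice condition $f=f_D\circ\chi$, not any replacement property, that pins homomorphisms down, and the argument applies verbatim to every digraph $D$ without isolated points, loops included, so \Cref{thm:arrow} is invoked exactly as stated. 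Your proof instead recycles the machinery of \Cref{factrigid}: since $G_2$ is rigid and a strong replacement graph, for irreflexive $D$ every graph homomorphism (in particular every slice homomorphism) $G_2\to D\star G_2$ lands in a canonical copy and must equal some $\phi_{(u,v)}^{D,G_2}$. This is sound, and you correctly spotted the one real pitfall --- strong replacement is only guaranteed for irreflexive $D$ --- and patched it by rerunning \Cref{thm:arrow} over the full subcategory of irreflexive digraphs without isolated points; that patch works, since the fullness argument in the proof of \Cref{thm:arrow} only ever applies the hypothesis to digraphs in the source category, all of which are now irreflexive. You do, however, lean on one extra unproven ingredient: alg-universality of that subcategory. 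This is true and standard (loop-free undirected graphs without isolated vertices embed fully into it as symmetric digraphs, and they form an alg-universal category, e.g.\ by \Cref{thm:strongreplace} applied with the gadget $G_k$), but it goes beyond the paper's stated Fact for $\Rel(2)^+$. What your route buys is generality: it in effect proves a slice analogue of \Cref{thm:strongreplace} --- any rigid strong replacement graph admitting a colouring over $G$ that identifies $a$ and $b$ makes $\Gra/G$ alg-universal --- which would also handle $\Gra/C_{2k-1}$ for all $k$ at once. What the paper's route buys is economy: a four-vertex gadget, a three-line rigidity check using only the colouring, no dependence on \cite{neshell}, and no irreflexivity caveat.
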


\begin{proof}
    Write $\{0,1,2\}$ for the set of vertices of $C_3$ and let $P_3$ be the path of length three with vertex set $\{a,b,c,d\}$. Define the homomorphism $f: P_3 \to C_3$ as
    \begin{multicols}{4}
    \begin{itemize}[label={}]
        \item $a\mapsto 0$
        \item $b \mapsto 1$
        \item $c \mapsto 2$
        \item $d \mapsto 0$.
    \end{itemize}
    \end{multicols}
    Since $f(a)=f(d)$, we may view $(P_3,f)$ as a gadget with respect to $a$ and $d$. We argue that for every directed graph $D$ without isolated points the only homomorphisms $(P_3,f)\to (D\star P_3,f_D)$ are the maps $\phi_{(u,v)}^{D,P_3}$; \Cref{thm:arrow} then implies that $\Gra/C_3$ is algebraically universal. 

    So, let $\chi:(P_3,f) \to (D \star P_3,f_D)$ be a morphism in $\Gra/C_3$. Since $f = f_D \circ \chi$ and $b$ is the only element sent to $1$ under $f$, it follows that $\chi(b)=(u,v,b)$ for some edge $(u,v)$ in $D$. From this, it is easy to see that 
    \[\chi(a)=u, \chi(c)=(u,v,c), \chi(d)=v,\]
    and therefore $\chi = \phi_{(u,v)}^{D,P_3}$ as claimed. 
\end{proof}

\begin{lemma}
    $\Gra/C_4$ is algebraically universal.
\end{lemma}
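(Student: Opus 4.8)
The plan is to apply \Cref{thm:arrow} with a gadget that wraps a path once around the four-cycle, exactly mirroring the proof of \Cref{lem:c3}. Write $\{0,1,2,3\}$ for the vertices of $C_4$ with edges $01,12,23,30$, and let $P_4$ be the path with vertex set $\{a,b,c,d,e\}$ and edges $ab,bc,cd,de$. Define $f:P_4 \to C_4$ by $a\mapsto 0$, $b\mapsto 1$, $c\mapsto 2$, $d\mapsto 3$, $e\mapsto 0$; this is a homomorphism since consecutive vertices receive adjacent colours and $3,0$ are adjacent in $C_4$. Because $f(a)=f(e)=0$, we may regard $(P_4,f)$ as a gadget with respect to the non-adjacent vertices $a$ and $e$. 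It then suffices to show that for every digraph $D$ without isolated points the only homomorphisms $(P_4,f)\to(D\star P_4,f_D)$ in $\Gra/C_4$ are the canonical maps $\phi_{(u,v)}^{D,P_4}$.

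The key observation --- and the feature that dictates the choice of gadget --- is that, traversing the cycle once, each of the colours $1,2,3$ has a unique $f$-preimage in $P_4$. Consequently, in $D\star P_4$ the vertices coloured $2$ by $f_D$ are precisely the vertices $(u,v,c)$ indexed by edges $(u,v)\in E(D)$, and similarly for colours $1$ and $3$; the original vertices of $D$ are all coloured $0$. Given a morphism $\chi:(P_4,f)\to(D\star P_4,f_D)$, the identity $f=f_D\circ\chi$ forces $\chi(c)$ to lie in colour class $2$, so $\chi(c)=(u,v,c)$ for some edge $(u,v)$. From here I would propagate along the path: the only neighbours of $(u,v,c)$ in $D\star P_4$ are $(u,v,b)$ and $(u,v,d)$, which pins $\chi(b)=(u,v,b)$ and $\chi(d)=(u,v,d)$ by their colours; the remaining colour-$0$ neighbour of $(u,v,b)$ is $u$ and that of $(u,v,d)$ is $v$, forcing $\chi(a)=u$ and $\chi(e)=v$. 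Thus $\chi=\phi_{(u,v)}^{D,P_4}$, and \Cref{thm:arrow} yields algebraic universality.

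Since each step of the propagation is uniquely determined, I do not expect a genuine obstacle here; the argument is essentially a colour-by-colour rigidity check identical in spirit to \Cref{lem:c3}. The only point requiring care is the verification that no \emph{folding} of the path is possible --- that is, that the adjacency and colour constraints leave no freedom at any vertex. This is exactly what the wrap-once-around colouring guarantees: because $1,2,3$ are uniquely preimaged, the central vertex $c$ anchors $\chi$ inside a single gadget copy $\phi_{(u,v)}^{D,P_4}[P_4]$, after which the two colour-$0$ endpoints are recovered as the unique colour-$0$ neighbours of $(u,v,b)$ and $(u,v,d)$. The main conceptual content, then, is simply recognising that the correct arrow for $C_k$ is a path traversing the cycle exactly once, a template that uniformly generalises the $C_3$ case.
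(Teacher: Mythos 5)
Your proof is correct and coincides with the paper's: the paper uses exactly the same gadget $(P_4,f)$ with $a\mapsto 0$, $b\mapsto 1$, $c\mapsto 2$, $d\mapsto 3$, $e\mapsto 0$, viewed as an arrow with respect to $a$ and $e$, and then simply states that the verification "proceeds as \Cref{lem:c3}" --- which is precisely the colour-propagation argument (anchor $\chi(c)$ in colour class $2$, then pin $b,d$ and finally the endpoints) that you spell out.
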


\begin{proof}
    Write $\{0,1,2,3\}$ for the vertex set of $C_4$, and $\{a,b,c,d,e\}$ for the vertex set of $P_4$. We consider the homomorphism $f:P_4 \to C_4$ defined by
    \begin{multicols}{5}\begin{itemize}[label={}]
        \item $a\mapsto 0$
        \item $b \mapsto 1$
        \item $c \mapsto 2$
        \item $d \mapsto 3$
        \item $e \mapsto 0$
    \end{itemize}\end{multicols}
    The remaining argument proceeds as \Cref{lem:c3} by using $(P_4,f)$ as a gadget with respect to $a$ and $e$.
\end{proof}

\begin{lemma}
    $\Gra/P_4$ is algebraically universal.
\end{lemma}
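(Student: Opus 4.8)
The plan is to produce a gadget satisfying the hypotheses of \Cref{thm:arrow}, exactly as in the preceding two lemmas; the only new difficulty is that $P_4$ is acyclic, so no ``wrap-around'' gadget is available and symmetry must be broken by hand. Write $\{0,1,2,3,4\}$ for the vertex set of $P_4$ with edges $i \sim i+1$. I would take for $H$ the path $x_0 x_1 \cdots x_{10}$ on eleven vertices, together with the homomorphism $f\colon H \to P_4$ given by the colour sequence
\[ (f(x_0),\dots,f(x_{10})) = (0,1,2,3,4,3,2,1,2,1,0), \]
and set $a = x_0$, $b = x_{10}$, so that $f(a)=f(b)=0$ and $(H,f)$ is a gadget with respect to $a,b$. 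The shape is chosen for two reasons: the colour $4$ is attained at the unique vertex $x_4$, which serves as an anchor; and the two arms hanging off $x_4$, namely $x_4 x_3 x_2 x_1 x_0$ (a straight descent of length $4$) and $x_4 x_5 \cdots x_{10}$ (a descent of length $6$ with one wiggle $\dots,2,1,2,1,0$), have different lengths, so the reflection of $H$ is not colour-preserving. This last point is essential: a symmetric up–down path such as $(0,1,2,3,4,3,2,1,0)$ would admit its reflection as an extra homomorphism, violating the hypothesis of \Cref{thm:arrow}.

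By \Cref{thm:arrow} it then suffices to show that for every digraph $D$ without isolated points the only homomorphisms $\chi\colon (H,f) \to (D \star H, f_D)$ are the maps $\phi_{(u,v)}$. First I would locate the anchor: since $f_D$ takes the value $4$ only at vertices of the form $(u,v,x_4)$, colour preservation forces $\chi(x_4) = (u,v,x_4)$ for some edge $(u,v) \in E(D)$. Next I would confine the whole gadget to the copy indexed by $(u,v)$. The colour $0$ occurs in $H$ only at $a$ and $b$, and by construction the domain of $D\star H$ contains no triples over $a$ or $b$, so the colour-$0$ vertices of $D\star H$ are exactly the vertices of $D$; deleting them leaves the copy-interiors as the connected components of $(D\star H)\setminus D$. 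Every vertex of $H$ other than $a,b$ has colour in $\{1,2,3,4\}$ and is therefore sent into $(D\star H)\setminus D$, and since $H\setminus\{a,b\}$ is connected with $x_4$ already placed in the copy $(u,v)$, all of $H\setminus\{a,b\}$ lands in that single copy.

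It remains to show that inside this copy $\chi$ agrees with $\phi_{(u,v)}$. The short arm is immediate: along $x_4 x_3 x_2 x_1 x_0$ the colours $4,3,2,1,0$ strictly decrease, so at each step the required colour pins down a unique neighbour, giving $\chi(x_i)=(u,v,x_i)$ for $i\le 4$ and $\chi(a)=u$. For the long arm I would propagate $\chi$ from $x_4$ through $x_5,\dots,x_{10}=b$, using that $\chi(b)$ is a colour-$0$ vertex and hence equals $u$ or $v$. This is the step I expect to be the main obstacle: the wiggle produces colour-$1$ and colour-$2$ vertices whose two neighbours share a colour, so the propagation is genuinely non-deterministic and admits local ``folds''. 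The key observation is a reachability constraint: on the $b$-side the only colour-$1$ vertex of the copy adjacent to a colour-$0$ vertex is $(u,v,x_9)$ (adjacent to $v$), while the other colour-$1$ vertex $(u,v,x_7)$ has both neighbours of colour $2$; hence any fold that sends the terminal vertex $x_9$ onto $(u,v,x_7)$ strands $b$ with no colour-$0$ neighbour, and only the straight descent reaching $\chi(b)=v$ survives. The very same obstruction kills the ``swapped'' assignment $\chi(x_3)=(u,v,x_5)$, since propagating the short arm down the long side forces $\chi(x_1)=(u,v,x_7)$, leaving $a$ unplaceable. Therefore $\chi(x_i)=(u,v,x_i)$ throughout, so $\chi=\phi_{(u,v)}$ and \Cref{thm:arrow} yields that $\Gra/P_4$ is algebraically universal.
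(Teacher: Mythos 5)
Your overall strategy is the paper's (a path gadget with a unique colour-$4$ peak and asymmetric arms, fed into \Cref{thm:arrow}), and two of your steps are correct and cleanly argued: the anchoring $\chi(x_4)=(u,v,x_4)$, and the confinement of $\chi(H\setminus\{a,b\})$ to a single copy-interior via connectedness of $(D\star H)\setminus D$. But your gadget does not satisfy the hypothesis of \Cref{thm:arrow}: it admits a \emph{fold over the peak} that you never rule out. Write positions $0,\dots,10$ for the copy glued along $(u,v)$, so position $0$ is $u$, position $i$ is $(u,v,x_i)$ for $1\le i\le 9$, and position $10$ is $v$; the colours of positions $0,\dots,10$ are $0,1,2,3,4,3,2,1,2,1,0$. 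Consider the map
\[
(x_0,x_1,\dots,x_{10})\ \longmapsto\ \text{positions }(0,1,2,3,4,3,2,1,2,1,0),
\]
i.e.\ $\chi(x_5)=(u,v,x_3)$, $\chi(x_6)=(u,v,x_2)$, $\chi(x_7)=(u,v,x_1)$, $\chi(x_8)=(u,v,x_2)$, $\chi(x_9)=(u,v,x_1)$, $\chi(b)=u$. Consecutive positions are adjacent, and the colour sequence read along positions $4,3,2,1,2,1,0$ is exactly $4,3,2,1,2,1,0$, which is $f$ on $x_4,\dots,x_{10}$; so $\chi$ is a morphism $(H,f)\to(D\star H,f_D)$ in $\Gra/P_4$, and it is not any $\phi_{(u,v)}$ (it is not even injective). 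The flaw in your long-arm analysis is the phrase ``on the $b$-side'': nothing confines the image of $x_5,\dots,x_{10}$ to the $b$-side of the copy, and the colour-$1$ vertex $(u,v,x_1)$ \emph{is} adjacent to a colour-$0$ vertex, namely $u$. The root cause is the design choice of a straight short arm $4,3,2,1,0$: after descending $3,2,1$ along it, the long arm's wiggle $2,1$ is absorbed by bouncing between positions $2$ and $1$, and its terminal $0$ lands on $u$. Asymmetry of arm lengths kills the reflection, as you observed, but it does not kill folds. The failure is not merely formal: taking $D$ to be a single directed edge, $(D\star H,f_D)\cong(H,f)$ and the fold above is a proper endomorphism of it, while $D$ is rigid in $\Rel(2)^+$, so your functor is genuinely not full.

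This is precisely why the paper's gadget is $P_{12}$ with colour sequence $(0,1,2,1,2,3,4,3,2,3,2,1,0)$: \emph{both} arms carry a wiggle, and the wiggles sit at different heights ($1$--$2$ on the $a$-side, $2$--$3$ on the $b$-side). Any attempt to fold one arm onto the other (or onto itself) then gets stuck: e.g.\ sending the $b$-arm down the $a$-side forces, after the $2$--$3$ wiggle, a colour-$0$ neighbour of a vertex whose neighbours both have colour $2$; symmetrically, sending the $a$-arm down the $b$-side immediately demands a colour-$1$ neighbour where both neighbours have colour $3$. If you replace your gadget by one of this kind (wiggles on both arms, at distinct heights), your surrounding argument --- anchor at the unique colour-$4$ vertex, confinement to one copy, then case analysis of descents --- goes through; your confinement step is in fact a tidier way to organize that part than the distance count used in the paper.
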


\begin{proof}
    Write $\{0,1,2,3,4\}$ for the vertices of $P_4$, and $\{a,\dots,m\}$ for the vertex set of $P_{12}$. Consider the homomorphism $\psi:P_{12} \to P_4$ defined by: 
    \begin{multicols}{5}
    \begin{itemize}[label={}]
        \item $a\mapsto 0$
        \item $b \mapsto 1$
        \item $c \mapsto 2$
        \item $d \mapsto 1$
        \item $e \mapsto 2$
        \item $f \mapsto 3$
        \item $g \mapsto 4$
        \item $h \mapsto 3$
        \item $i \mapsto 2$
        \item $j \mapsto 3$
        \item $k \mapsto 2$
        \item $l \mapsto 1$
        \item $m \mapsto 0$
    \end{itemize}
    \end{multicols}
    \begin{figure}[h!]
        \begin{tikzpicture}[scale=0.30]

  \node[fill,circle,scale=0.5,label={0}] (A) at (0, 0) {};
  \node[fill,circle,scale=0.5,label={1}]  (B) at (2, 0) {};
  \node[fill,circle,scale=0.5,label={2}]  (C) at (4, 0) {};
  \node[fill,circle,scale=0.5,label={1}]  (D) at (6, 0) {};
  \node[fill,circle,scale=0.5,label={2}]  (E) at (8, 0) {};
  \node[fill,circle,scale=0.5,label={3}]  (F) at (10, 0) {};
  \node[fill,circle,scale=0.5,label={4}]  (G) at (12, 0) {};
  \node[fill,circle,scale=0.5,label={3}]  (H) at (14, 0) {};
  \node[fill,circle,scale=0.5,label={2}]  (I) at (16, 0) {};
  \node[fill,circle,scale=0.5,label={3}]  (J) at (18, 0) {};
  \node[fill,circle,scale=0.5,label={2}]  (K) at (20, 0) {};
  \node[fill,circle,scale=0.5,label={1}]  (L) at (22, 0) {};
  \node[fill,circle,scale=0.5,label={0}]  (M) at (24, 0) {};

  \draw(A) -- (B);
  \draw (B) -- (C);
  \draw (C) -- (D);
  \draw (E) -- (D);
  \draw (E) -- (F);
  \draw (F) -- (G);
  \draw (H) -- (G);
  \draw (I) -- (H);
  \draw (J) -- (I);
  \draw (K) -- (J);
  \draw (L) -- (K);
  \draw (M) -- (L);

\end{tikzpicture}
    \end{figure}
    Since $\psi(a)=\psi(m)=0$, we consider $(P_{12},\psi)$ as a gadget with respect to $a$ and $m$. As above, we argue that for every directed graph $D$ without isolated points the only homomorphisms $(P_{12},\psi)\to (D\star P_{12},\psi_D)$ in $\Gra/P_4$ are the maps $\phi_{(u,v)}^{D,P_{12}}$ for $(u,v) \in E(D)$; this shows universality by \Cref{thm:arrow}.

    Letting $\chi:(P_{12},\psi)\to (D\star P_{12},\psi_D)$ be a homomorphism we see that $\chi(g)=(u,v,g)$ for some edge $(u,v) \in E(D)$, since $\psi= \psi_D \circ \chi$ and $g$ is the only element of $P_{12}$ that is mapped to $4$ under $\psi$. Consider $\chi(f)$: since $(f,g) \in E(P_{12})$ it follows that this is either $(u,v,f)$ or $(u,v,h)$. We argue that the latter leads to a contradiction. Indeed, if $\chi(f)=(u,v,h)$ then necessarily $\chi(e)=(u,v,i)$ since that is the only vertex adjacent to $(u,v,h)$ whose image under $\psi_D$ is $2$. But then $\chi(d)$ ought to be one of $(u,v,h)$ or $(u,v,j)$; however this violates $\psi= \psi_D \circ \chi$. Consequently, $\chi(f)=(u,v,f)$. This forces $\chi(a)=u$ as $u$ is the only element of distance $\leq 5$ from $(u,v,f)$ in $D \star P_{12}$ with $\psi_D(u)=0$. It therefore also follows that $\chi(x)=(u,v,x), \text{ for } x\in \{b,c,d,e\}$. A similar argument ensures that $\chi(h)=(u,v,h)$, which in turn implies that $\chi(m)=v$, and so $\chi(x)=(u,v,x), \text{ for } x \in \{i,j,k,l\}$. Hence $\chi = \phi_{u,v}^{D,P_{12}}$ as claimed. 
\end{proof}

\begin{lemma}
    $\Gra/Y$ is algebraically universal. 
\end{lemma}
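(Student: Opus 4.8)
The graph $Y$ is the claw $K_{1,3}$: a central vertex $c$ joined to three leaves $\ell_1,\ell_2,\ell_3$. As in the three preceding lemmas, the plan is to exhibit a gadget $(H,f)$ with a homomorphism $f\colon H\to Y$ and two vertices $a,b$ satisfying $f(a)=f(b)$ for which the hypothesis of \Cref{thm:arrow} holds, namely that for every digraph $D$ without isolated points the only homomorphisms $(H,f)\to(D\star H,f_D)$ in $\Gra/Y$ are the canonical maps $\phi_{(u,v)}^{D,H}$; universality then follows at once. The first design decision is that $a$ and $b$ should map to a \emph{leaf} of $Y$ rather than to $c$. Indeed, the vertices of $D$ are precisely the vertices of $D\star H$ carrying the label $f(a)$, and if this label were $c$ then a high-degree vertex of $D$ could become adjacent to vertices of all three leaf-classes, destroying the local feature I intend to exploit.

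With $f(a)=f(b)=\ell_1$, every vertex of $D$ is leaf-labelled, and every internal vertex $(u,v,y)$ of $D\star H$ has exactly the neighbours dictated by $y$ inside the single copy $\phi_{(u,v)}^{D,H}[H]$. I would take $H$ to be a tree equipped with a surjection $f\colon H\to Y$ built around a distinguished vertex $m$ with $f(m)=c$ that is the \emph{unique} vertex of $H$ whose neighbourhood meets all three leaf-classes $\{\ell_1,\ell_2,\ell_3\}$; this $m$ plays the anchoring role that the unique preimage of the label $4$ played in the $P_4$ case. The rest of $H$ should be an asymmetrically labelled ``spine'' joining $a$ to $b$ through $m$, chosen so that the labelling admits no nontrivial self-similarity, and in particular no label-preserving reflection swapping the two ends, so that $a$ and $b$ are distinguishable inside $H$ even though $f(a)=f(b)$.

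The verification then splits into two stages. First, for any morphism $\chi\colon(H,f)\to(D\star H,f_D)$, the image $\chi(m)$ is a $c$-labelled vertex adjacent to vertices of all three leaf-labels, since $m$ has three neighbours of labels $\ell_1,\ell_2,\ell_3$ whose images are forced distinct by their labels. As the $c$-labelled vertices of $D\star H$ are exactly the internal vertices $(u,v,y)$ with $f(y)=c$, and such a vertex meets all three leaf-classes precisely when $y$ does, the uniqueness of $m$ forces $\chi(m)=(u_0,v_0,m)$ for a single edge $(u_0,v_0)\in E(D)$. Second, I would propagate outward along the tree from $m$, pinning $\chi(x)=(u_0,v_0,x)$ vertex by vertex and in particular $\chi(a)=u_0$, $\chi(b)=v_0$; at each step the labels of the neighbours together with the asymmetry of the spine identify the correct image and rule out any fold or identification, exactly as in the explicit case analysis of the $P_4$ lemma. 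Once $\chi$ agrees with $\phi_{(u_0,v_0)}^{D,H}$ on all of $H$, \Cref{thm:arrow} gives that $\Gra/Y$ is algebraically universal.

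The step I expect to be the genuine obstacle is the propagation, and specifically the exclusion of \emph{foldings}. Because $Y$ is a tree, hence bipartite with no odd cycles, homomorphisms into $D\star H$ are far more flexible than in the cycle cases $C_3$ and $C_4$: a leaf-labelled vertex of $H$ of degree two has two $c$-labelled neighbours carrying the \emph{same} label, so a fold identifying them is locally label-consistent and can only be excluded by a global argument. The entire purpose of choosing an aperiodic, reflection-free labelling of the spine is to guarantee that every such putative fold, and every potential swap of the merge-points $a$ and $b$, propagates to a label contradiction. Pinning this down is the combinatorial heart of the proof, and is precisely the place where the design of the labelling, rather than any soft categorical argument, does the work.
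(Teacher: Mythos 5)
Your overall strategy is the paper's own: pick a gadget $(H,f)$ over $Y$ with $f(a)=f(b)$ equal to a \emph{leaf} label, and verify the hypothesis of \Cref{thm:arrow} that every morphism $(H,f)\to(D\star H,f_D)$ is one of the maps $\phi_{(u,v)}^{D,H}$. But what you have written is a design brief, not a proof, and the gap is exactly the one you flag yourself in your last paragraph. You never exhibit a concrete gadget: the tree $H$, its labelling, and the ``asymmetrically labelled spine'' are specified only by the properties you hope they will have (``chosen so that the labelling admits no nontrivial self-similarity''), and the propagation step --- ruling out foldings, which you correctly identify as the only genuinely hard point, since a degree-two vertex with a repeated neighbour label can be folded in a locally label-consistent way --- is never carried out for any actual labelled tree. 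Since the hypothesis of \Cref{thm:arrow} is thus never verified for any object of $\Gra/Y$, the lemma is not proved; a correct argument cannot end where yours does, because ``the combinatorial heart of the proof'' is precisely what is missing.

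For comparison, the paper's gadget is simpler than your design and uses a different anchoring mechanism. It takes $H=P_6$ (vertices $a,\dots,g$) with labels $0,1,2,1,3,1,0$, where $1$ is the centre of $Y$ and $0,2,3$ its leaves; so $a,g\mapsto 0$, matching your leaf-label choice. There is no vertex adjacent to all three leaf classes; instead the anchors are $c$ and $e$, the \emph{unique} preimages of the labels $2$ and $3$. Any morphism $\chi$ sends them to vertices $(u,v,c)$ and $(u',v',e)$, and since $c,e$ are at distance $2$ in $P_6$ while internal vertices of distinct copies in $D\star P_6$ are at distance at least $4$ (each is at distance $2$ from the nearest vertex of $D$), both anchors land in the same copy. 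The fold exclusion is then a two-line case analysis: $d$, adjacent to both anchors, must map to $(u,v,d)$; if $b$ folded onto $(u,v,d)$, then $a$ would need a $0$-labelled neighbour of $(u,v,d)$, but its only neighbours carry labels $2$ and $3$, so $\chi(b)=(u,v,b)$ and $\chi(a)=u$, and symmetrically $\chi(f)=(u,v,f)$, $\chi(g)=v$. Your ``unique branch vertex'' idea could plausibly be made to work as well (for instance, by attaching a pendant $0$-labelled vertex to a suitable path), but only after committing to a concrete labelled tree and running exactly the fold analysis you postponed.
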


\begin{proof}
    Write $\{0,1,2,3\}$ for the vertices of $Y$ as in the figure below, and $\{a,\dots,g\}$ for the vertices of $P_{6}$. Define a homomorphism $\psi:P_6 \to Y$ as follows:
        \begin{multicols}{4}
    \begin{itemize}[label={}]
        \item $a\mapsto 0$
        \item $b \mapsto 1$
        \item $c \mapsto 2$
        \item $d \mapsto 1$
        \item $e \mapsto 3$
        \item $f \mapsto 1$
        \item $g \mapsto 0$
    \end{itemize}
    \end{multicols}
    \begin{figure}[h!]
        \begin{tikzpicture}[scale=0.30]

  \node[fill,circle,scale=0.5,label={0}] (A) at (0, 3) {};
  \node[fill,circle,scale=0.5,label={1}]  (B) at (2, 3) {};
  \node[fill,circle,scale=0.5,label={2}]  (C) at (4, 3) {};
  \node[fill,circle,scale=0.5,label={1}]  (D) at (6, 3) {};
  \node[fill,circle,scale=0.5,label={3}]  (E) at (8, 3) {};
  \node[fill,circle,scale=0.5,label={1}]  (F) at (10, 3) {};
  \node[fill,circle,scale=0.5,label={0}]  (G) at (12, 3) {};

  \draw(A) -- (B);
  \draw (B) -- (C);
  \draw (C) -- (D);
  \draw (E) -- (D);
  \draw (E) -- (F);
  \draw (F) -- (G);

\end{tikzpicture}
        \qquad
        \begin{tikzpicture}[scale=0.30]

  \node[fill,circle,scale=0.5,label={0}] (A) at (0, 0) {};
  \node[fill,circle,scale=0.5,label={180:1}]  (B) at (3, 3) {};
  \node[fill,circle,scale=0.5,label={2}]  (C) at (3, 6) {};
  \node[fill,circle,scale=0.5,label={3}]  (D) at (6, 3) {};

  \draw(A) -- (B);
  \draw (B) -- (C);
  \draw (B) -- (D);

\end{tikzpicture}
    \end{figure}
    We consider $(P_{6},\psi)$ as a gadget with respect to $a$ and $g$ and argue that for every directed graph $D$ without isolated points the only homomorphisms $(P_{6},\psi)\to (D\star P_{6},\psi_D)$ in $\Gra/Y$ are the maps $\phi_{(u,v)}^{D,P_{12}}$ for $(u,v) \in E(D)$, obtaining universality by \Cref{thm:arrow}.

    So, let $\chi:(P_6,\psi)\to (D \star P_6,\psi_D)$ be a morphism. We immediately obtain that $\chi(c)=(u,v,c)$ for some $(u,v)\in E(D)$ and $\chi(e)=(u',v',e)$ for some $(u',v') \in E(D)$ since $c$ and $e$ are the only elements of $P_6$ whose image under $\psi$ is $2$ and $3$ respectively. Since the distance of $c$ and $e$ in $P_6$ is $2$, this implies that the distance of $(u,v,c)$ and $(u',v',e)$ is $\leq 2$ in $D \star P_6$; this forces that $(u,v)=(u',v')$. As a result, $\chi(a)=u, \chi(g)=v$, and $\chi(x)=(u,v,x)$ for $x \in \{b,c,d,e,f\}$. Consequently, $\chi$ is in fact equal to $\phi_{(u,v)}^{D,P_6}$.
\end{proof}

\begin{theorem}
    Let $G$ be a graph that contains one of $C_3,C_4,P_4,Y$ as a subgraph. Then $\Gra/G$ is algebraically universal.  
\end{theorem}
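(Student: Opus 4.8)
The plan is to deduce the statement from the four special cases already treated (Lemma~\ref{lem:c3} and the three lemmas following it) by means of a single general observation: a gadget witnessing the hypothesis of \Cref{thm:arrow} for a slice $\Gra/H$ is inherited by $\Gra/G$ along any subgraph inclusion $H \hookrightarrow G$. Concretely, let $\iota : H \hookrightarrow G$ be the inclusion of a subgraph, let $A$ be a graph gadget with respect to vertices $a,b$, and let $f : A \to H$ be a homomorphism with $f(a)=f(b)$ such that for every digraph $D$ without isolated points the only morphisms $(A,f) \to (D \star A, f_D)$ in $\Gra/H$ are the maps $\phi_{(u,v)}^{D,A}$. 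I claim that the same gadget $A$, now equipped with the homomorphism $\iota \circ f : A \to G$, satisfies the hypothesis of \Cref{thm:arrow} for $\Gra/G$, whence $\Gra/G$ is algebraically universal.

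To prove the claim I would first record two routine compatibilities. Since $\iota$ is a homomorphism and $f(a)=f(b)$, we have $(\iota \circ f)(a) = (\iota \circ f)(b)$, so $(A,\iota \circ f)$ is a legitimate gadget over $G$; and unwinding the definition of $f_D$ one checks at once that $(\iota \circ f)_D = \iota \circ f_D$, as passing from $f$ to $\iota \circ f$ merely post-composes each defining case with $\iota$. The crux is then to show that the two relevant morphism sets coincide. For a graph homomorphism $\chi : A \to D \star A$, being a morphism $(A,\iota \circ f) \to (D \star A,(\iota \circ f)_D)$ in $\Gra/G$ means $\iota \circ f = (\iota \circ f)_D \circ \chi = \iota \circ (f_D \circ \chi)$, and since $\iota$ is injective this is equivalent to $f = f_D \circ \chi$, i.e.\ to $\chi$ being a morphism $(A,f) \to (D \star A, f_D)$ in $\Gra/H$. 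By hypothesis both sets therefore equal $\{\phi_{(u,v)}^{D,A} : (u,v) \in E(D)\}$, and \Cref{thm:arrow} applies.

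It then remains only to invoke the claim four times. If $G$ contains $C_3$, $C_4$, $P_4$, or $Y$ as a subgraph, I take $\iota$ to be the corresponding inclusion and $(A,f)$ to be the gadget $(P_3,f)$, $(P_4,f)$, $(P_{12},\psi)$, or $(P_6,\psi)$ produced in the proof of the relevant preceding lemma, each of which was shown to satisfy exactly the rigidity hypothesis required above.

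The essential step — and the only place the argument could fail if $\iota$ were replaced by an arbitrary homomorphism — is the cancellation of $\iota$ in the morphism-set coincidence, which relies precisely on $\iota$ being injective, i.e.\ on a subgraph inclusion being an embedding. This is what transports the ``only the $\phi_{(u,v)}^{D,A}$'' rigidity statement from the slice over the small graph up to the slice over $G$. Beyond isolating this observation I do not anticipate any further obstacle, since all of the combinatorial rigidity verifications have already been carried out in the four base cases.
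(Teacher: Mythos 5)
Your proof is correct, and it follows the same reduction to the four base cases as the paper, but it implements the transfer step by a different mechanism. The paper's proof is two lines: a subgraph inclusion $\iota : G_1 \hookrightarrow G_2$ induces a fully faithful functor $\Gra/G_1 \to \Gra/G_2$, namely $(H,f) \mapsto (H, \iota \circ f)$ and the identity on morphisms; since algebraic universality means that every algebraic category admits a full embedding into the category in question, and full embeddings compose, universality of $\Gra/G_1$ is inherited by $\Gra/G_2$, so the \emph{statements} of the four lemmas suffice. You instead transport the gadget itself: you verify $(\iota \circ f)_D = \iota \circ f_D$ and that post-composition with the injective $\iota$ can be cancelled, so that the morphisms $(A, \iota \circ f) \to (D \star A, (\iota \circ f)_D)$ in $\Gra/G$ are exactly the morphisms $(A,f) \to (D \star A, f_D)$ in $\Gra/H$, and then you re-apply \Cref{thm:arrow} over $G$. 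Both arguments hinge on precisely the same cancellation --- your injectivity remark is exactly what makes the paper's functor full, not merely faithful --- so the mathematical content is equivalent; the difference is in what is invoked. Your route needs the \emph{proofs} of the four lemmas (the explicit gadgets $(P_3,f)$, $(P_4,f)$, $(P_{12},\psi)$, $(P_6,\psi)$), whereas the paper's route needs only their conclusions and is insensitive to how universality of the four base slices was established; in that sense the paper's observation is more robust and reusable. On the other hand, your version stays entirely inside the gadget framework of \Cref{thm:arrow} and avoids appealing to the (easy, but unstated in the paper) fact that algebraic universality propagates along full embeddings, so it is self-contained at a more elementary level. Either write-up would be acceptable; if you keep yours, state explicitly the compatibility $(\iota \circ f)_D = \iota \circ f_D$ as a displayed identity, since it is the one point where the definition of the slice arrow construction must be unwound.
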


\begin{proof}
    Notice that if $G_1$ is a subgraph of $G_2$ then there is a natural fully faithful functor $F:\Gra/G_1 \to \Gra/G_2$. Therefore it suffices to prove this when $G$ is one of $C_3,C_4,P_4,Y$. This is precisely the content of the four lemmas above. 
\end{proof}

As a final remark, we note that in the case of general digraphs a straightforward generalisation of our results shows that $\Rel(2)/D$ is algebraically universal if, and only if, $D$ contains a loop, a directed $2$-cycle, or any orientation of the four graphs above. The case of arbitrary relational structures is less clear. 

\printbibliography

\end{document}